\documentclass[12pt]{article}
\def\version{3.6.2015}\def\users{}  %
\def\users{final-layout}  
\usepackage{graphicx}
\usepackage{graphics}
\usepackage{amsfonts}
\usepackage{amssymb}
\usepackage{amsmath}
\usepackage{amsthm}
\usepackage{mathrsfs}
\usepackage{color}
\usepackage{psfrag}     
\usepackage{epstopdf}
\usepackage{hyperref}

\setlength{\textheight}{240mm}
 \setlength{\textwidth}{165mm}
  \setlength{\oddsidemargin}{0cm}
   \setlength{\topmargin}{-2cm}

\usepackage{ifthen}
\ifthenelse{\equal{\users}{final-layout}}{}{
\usepackage{fancyhdr}
\pagestyle{fancy}
\headheight=28pt\headwidth=16.5cm
\definecolor{gray}{gray}{0.5}
\rhead{\color{green}Plasticity with damage at small strains\\
T.Roub\'\i\v cek \& J.Valdman}
\chead{}
\lhead{\color{green}Version \version, file: \jobname.tex\\
compiled:
\number\day.\number\month.\number\year\ at
\the\hour:\ifnum\minute<10 0\fi\the\minute\ h }
}


\numberwithin{equation}{section}

\newcount\hour \newcount\minute
\hour=\time
\divide \hour by 60
\minute=\time
\loop \ifnum \minute > 59 \advance \minute by -60 \repeat

\usepackage[normalem]{ulem}
\definecolor{brown}{rgb}{0.5,0,0}
\ifthenelse{\equal{\users}{final-layout}}{

    \newcommand{\DELETE}[1]{}
    
    \newcommand{\COMMENT}[1]{}
    
    \newcommand{\REM}[1]{\marginpar{\bfseries\tiny{\color{blue}}}}
    \newcommand{\TODO}[1]{} 
}
{\usepackage[notcite,notref,color]{showkeys}
\definecolor{labelkey}{rgb}{1.,.2,0.}

 \newcommand{\DELETE}[1]{{\color{brown}\sout{#1}\color{black}}}
 
 \newcommand{\COMMENT}[1]{{\color{red}\uuline{#1}\color{black}}}
 
 \newcommand{\REM}[1]{\marginpar{\bfseries\tiny{\color{blue}#1}}}
 \newcommand{\TODO}[1]{{$^{\color{blue}{TODO:}}$\footnote{\color{blue}{#1}}}}  
}

\newtheorem{theorem}{Theorem}[section]

\newtheorem{proposition}[theorem]{Proposition}

\newtheorem{definition}[theorem]{Definition}
\newtheorem{remark}[theorem]{Remark}

\newcommand\DT[1]{\mathchoice
                 {{\buildrel{\hspace*{.1em}\text{\LARGE.}}\over{#1}}}
                 {{\buildrel{\hspace*{.1em}\text{\Large.}}\over{#1}}}
                 {{\buildrel{\hspace*{.1em}\text{\large.}}\over{#1}}}
                 {{\buildrel{\hspace*{.1em}\text{\large.}}\over{#1}}}}

\newcommand{\GD}{\mathchoice
                  {\Gamma_{\hspace*{-.15em}\mbox{\tiny\rm D}}}
                  {\Gamma_{\hspace*{-.15em}\mbox{\tiny\rm D}}}
                  {\Gamma_{\hspace*{-.1em}\mbox{\tiny\rm D}}}
                  {\Gamma_{\hspace*{-.05em}\mbox{\tiny\rm D}}}}
\newcommand{\GN}{\Gamma_{\hspace*{-.15em}\mbox{\tiny\rm N}}}
\newcommand{\Sdir}{\Sigma_{\mbox{\tiny\rm D}}}
\newcommand{\Snew}{\Sigma_{\mbox{\tiny\rm N}}}
\newcommand{\sY}{\sigma_{\mbox{\tiny\rm Y}}^{}}
\newcommand{\scrE}{\mathscr E}
\newcommand{\scrR}{\mathscr R}

\newcommand\eq{\eqref}
\newcommand\ti{\times}
\newcommand\pl{\partial}
\newcommand\eps{\varepsilon}
\newcommand{\R}{{\mathbb R}} 
\newcommand{\N}{{\mathbb N}} 
\newcommand\uD{u_{\mbox{\tiny\rm D}}}
\newcommand\DTuD{\DT u_{\mbox{\tiny\rm D}}}
\newcommand\wD{w_{\mbox{\tiny\rm D}}}

\newcommand{\weak}{\rightharpoonup}
\newcommand{\weaks}{\stackrel{*}{\rightharpoonup}}
\renewcommand{\d}{\mathrm{d}}
\newcommand{\dd}{\,\mathrm{d}}
\newcommand{\Diss}{\mathrm{Diss}}
\newcommand{\frM}{f}

\newcommand{\Vdots}{\mathchoice{\,\vdots\,}
        {\,\begin{minipage}[c]{.1em}\vspace*{-.3em}$^{\vdots}$\end{minipage}\,}
        {\,\tiny\vdots\,}{\,\tiny\vdots\,}}

\newcommand{\wb}[1]{\mathchoice{\text{\large$\hspace*{.03em}\overline{\text{\normalsize$\hspace*{-.03em}#1\hspace*{-.03em}$}}\hspace*{.03em}$}}
                      {\text{\large$\hspace*{.03em}\overline{\text{\normalsize$\hspace*{-.03em}#1\hspace*{-.03em}$}}\hspace*{.03em}$}}
                     {\text{\normalsize$\hspace*{.0em}\overline{\text{\normalsize$#1$}}\hspace*{.0em}$}}
                      {\text{\small$\bar{\text{\tiny$#1$}}$}}}

\DeclareMathOperator*{\argmin}{argmin}
\DeclareMathOperator*{\tr}{tr}
\newcommand\uDtaukk{u_{\mbox{\tiny\rm D},\tau}^{k-1}}
\newcommand\uDtau{\baru_{\mbox{\tiny\rm D},\tau}}

\newcommand\uDtauhk{u_{\mbox{\tiny\rm D},\tau h}^k}
\newcommand\DEV{\R_\mathrm{dev}^{d\ti d}}
\newcommand\SYM{\R_\mathrm{sym}^{d\ti d}}
\newcommand{\baru}{\wb{u}}
\newcommand{\barz}{\wb{z}}
\newcommand{\bare}{\wb{e}}
\newcommand{\barf}{\wb{f}}
\newcommand{\barg}{\wb{g}}
\newcommand{\barpi}{\wb{\pi}}
\newcommand{\barxi}{\wb{\xi}}
\newcommand{\barzeta}{\wb{\zeta}}
\newcommand{\bareta}{\wb{\eta}}
\newcommand{\barsigma}{\wb{\sigma}}
\newcommand{\ul}{\underline}

\DeclareMathOperator*{\dev}{dev}

\newcommand{\TIME}[1]{$^{^{^{^{^{^{^\text{\small$t=#1$}}}}}}}$}

\newcommand{\pseudo}[1]{(pseudo)\discretionary{-}{}{)}#1}

\begin{document}

\vspace*{1em}


\noindent{\LARGE\bf
Stress-driven solution to rate-independent\\[.2em]
elasto-plasticity with damage 
at small strains\\[.2em]and its computer implementation.}

\bigskip\bigskip

\noindent{\large\sc Tom\'{a}\v{s} Roub\'\i\v{c}ek}\\
{\it Mathematical Institute, Charles University, Sokolovsk\'a 83,
CZ--186~75~Praha~8
} and 
{\it Institute of Thermomechanics, Czech Acad.\ Sci., Dolej\v skova~5,
CZ--182~08 Praha 8
} and
{\it Institute of Information Theory and Autom., Czech Acad.\ Sci.,
CZ--182~08 Praha~8, Czech Republic.
}

\bigskip

\noindent{\large\sc Jan Valdman}\\
{\it Institute of Information Theory and Automation, Czech Acad. Sci., Pod vod\'{a}renskou v\v{e}\v{z}\'{\i} 4, CZ--182~08 Praha~8}
and
{\it Institute of Mathematics and Biomathematics, Faculty of Science, 
University of South Bohemia, Brani\v sovsk\' a~31, 
CZ--370 05 \v{C}esk\'{e} Bud\v{e}jovice, Czech Republic.}

\bigskip\medskip

\begin{center}\begin{minipage}[t]{16.5cm}

{\small \noindent {\it Abstract.} \baselineskip=11pt
The quasistatic rate-independent 
damage combined with linearized plasticity with hardening 
at small strains is investigated. The fractional-step time discretisation
is devised with the purpose to obtain a numerically efficient
scheme converging possibly to a physically relevant stress-driven 
solutions, which however is to be verified a-posteriori by
using a suitable integrated variant of the maximum-dissipation principle.
Gradient theories both for damage
and for plasticity are considered to make the scheme numerically
stable with guaranteed convergence within the class of weak solutions.
After finite-element approximation, 
this scheme is computationally implemented and illustrative 
2-dimensional simulations are performed.
\medskip

\noindent {\it Key Words.} 
Rate-independent systems, 
nonsmooth continuum mechanics, 
incomplete ductile damage, 
linearized plasticity with hardening, quasistatic rate-independent evolution,
local solutions, maximum dissipation principle, 
fractional-step time discretisation, 
quadratic 
programming.

\medskip

\noindent {\it AMS Subject Classification:} 
35Q90, 
49N10, 
65K15, 
74C05 
74R20, 
74S05,  
90C20, 
90C53. 
}
\end{minipage}
\end{center}

\vspace{1.5em}

\section{\large INTRODUCTION}

A combination of plasticity and damage, also called \emph{ductile damage},
open colorful scenarios with important applications in civil or 
mechanical engineering and with interesting mathematical problems,
in particular in comparison with mere plasticity or mere damage.
Often, both plastification and damage processes are much faster than
the rate of applied load and, in a basic scenario, any internal 
time scale is neglected and the mentioned inelastic processes
are considered as {\it rate independent}. The goal of this article 
is to devise a model together with its efficient computational 
approximation that would lead to a numerical stable and convergent
scheme and, at least in particular situations, calculate a physically
relevant solutions of a stress-driven type verifiable aposteriori 
by checking a suitable version of the maximum-dissipation principle.  

We use the very standard {\it linearized, associative,
plasticity} at small strain as presented e.g.\ in \cite{HanRed99PMTN}.
Simultaneously, we use also a rather standard {\it scalar (i.e.\ isotropic)
damage} as presented e.g.\ in \cite{Frem02NST}. 
We have 
primarily in mind a conventional engineering model with {\it unidirectional}
evolution of damage; actually, the healing will be here allowed rather 
from analytical reasons and can be expected ineffective in usual 
applications, cf.\ Remark~\ref{rem-no-healing} below.
All rate-dependent phenomena (as inertia or heat conduction and 
thermo-coupling) are neglected, this means the problem is considered 
as {\it quasistatic} and fully rate-independent. To avoid serious 
mathematical and computation difficulties, we have in mind 
an incomplete damage. 


The mentioned modelling simplification leading to quasistatic rate-independent
system however, which reflects a certain well-motivated asymptotics, brings 
however quite serious questions and difficulties because the class of 
reasonably general solutions is very wide if the governing energy is not
convex (as necessarily here) and involves solutions of very different 
nature, some of them physically not relevant; cf.\ 
\cite{Miel11DEMF,MieRou15RIST}. In particular, to avoid unwanted 
effects of unphysically easy damage under subcritical stress,
one cannot require (otherwise attractive idea of) energy conservation 
and thus cannot consider so-called energetic solutions in the
sense \cite{MieThe04RIHM}, although this concept is occasionally
used for damage with plasticity in purely mathematically-focused 
literature \cite{AlMaVi14GDMC,AlMaVi14GDMC+,Cris??GSQE}. This is related
with the discussion whether rather energy or rather stress is 
responsible for governing evolution of rate-independent systems 
\cite{Legu02STCC}. 

In contrast to the mentioned energetic solutions (which allows for simpler 
analysis without considering gradient plasticity but lead to recursive 
global-minimization problems which are difficult to realize and may slide to
unphysically scenarios of unrealistic early damage), we will 
focus here on solutions that are rather stress driven and that can be 
efficiently obtained numerically. We will rely on a certain careful
usage of a suitable integral-version of the maximum-dissipation principle, 
as devised in \cite{Roub15MDLS} and used, rather heuristically, in
engineering models of damage with plasticity and hardening, cf.\ 
\cite{ConCuo??NTCC}. This brings specific difficulties  
with convergence (which requires usage of gradient plasticity) and 
specific a-posteriori verification of a suitable 
approximate version of the mentioned maximum-dissipation principle,
as suggested in \cite[Rem.\,4.6]{Roub15MDLS} for damage itself, modified 
here for the combination of damage and plasticity analogously like
in \cite{RoPaMa??LSAR} for a surface variant of the elasto-plasto-damage
model. If the maximum-dissipation principle holds (at least with
a good accuracy) we can claim that the numerically obtained 
solution is physically relevant as stress-driven (with a good accuracy).

A physically more justified and better motivated approach would be
to involve a small viscosity to the damage variable or to 
the elastic and the plastic strains, and then to pass these
viscosities to zero. The limits obtained by this way are called 
vanishing-viscosity solutions to the original rate-independent 
system and their analysis and computer implementation is 
very difficult; for models without plasticity cf.\ \cite{KnRoZa13VVAR} 
for viscosity in damage or \cite{RoPaMa13QACV} for viscosity
in elastic strain.

In principle, there are two basic scenarios how the material
might respond to an increasing loading: either {\it first plasticize and 
then go into damage due to hardening effects}, or first go into damage and 
then plasticize; of course, various compromising scenarios are 
possible too. The latter scenario needs a damage influenced 
yield stress and allow for no hardening (and in particular
perfect plasticity), cf.\ \cite{RouVal??PPDH}. 
Let us only remark that a damage-dependence of the yield stress 
in the fully rate-independent setting would make
the dissipation state-dependent, which brings serious difficulties as
seen e.g.\ in \cite[Sect.\,3.2]{MieRou15RIST} and, for the particular 
elasto-plasto-damage model, in \cite{AlMaVi14GDMC,AlMaVi14GDMC+,Cris??GSQE}.
In this paper, we will however concern exclusively on the former scenario, i.e.\ 
in particular, the {\it damage does not influence the yield stress}. 
Moreover, we will consider only kinematic hardening, although all the 
considerations could easily be augmented by isotropic hardening, too.
(Another essential difference from \cite{RouVal??PPDH} is that, as already
explained, the energy is intentionally not conserved in here.)

The plan of the paper is as follows: In Section~\ref{sec-model} 
we devise the model in its classical formulation and then,
in Section~\ref{sect-LS}, its suitable weak formulation
with discussing stress-driven solutions and the role of the 
maximum-dissipation principle. In Section~\ref{sec-disc},
we propose a constructive time discretisation method and 
prove its numerical stability (i.e.\ a-priori estimates) and
convergence towards weak solutions. After a further finite-element 
discretisation outlined in Section~\ref{sect-BEM}, this allows 
for efficient computer implementation of the model, which is demonstrated 
on illustrative 2-dimensional examples in Section~\ref{sect-simul}.

\section{\large THE MODEL AND ITS WEAK FORMULATION}\label{sec-model}

Hereafter, we suppose that the damageable elasto-plastic
body occupies a bounded Lipschitz domain $\Omega\subset\R^d$, $d=2$ or $3$.
We  denote by $\vec{n}$ the outward unit normal to $\partial \Omega$.
We further suppose that the boundary of $\Omega$ splits as
\[
\partial\Omega :=\Gamma= \GD\cup \GN\,,
\]
with $\GD$ and $\GN$ open subsets in the relative topology of
$\partial\Omega$, disjoint one from each other and, up to $(d{-}1)$-dimensional
zero measure, covering $\partial\Omega$. Later, the 
Dirichlet or the Neumann boundary conditions will be prescribed on 
$\GD$ and $\GN$, respectively.
Considering $T>0$ a fixed time horizon, we set
\begin{displaymath}
I:=[0,T], \qquad
Q:=(0,T){\times}\Omega, \qquad \Sigma:=I{\times}\Gamma,
\qquad \Sdir\!:=I{\times}\GD, \qquad \Snew\!:=I{\times}\GN.
\end{displaymath}
Further, $\SYM$ and $\DEV$ will denote 
the set of symmetric or symmetric trace-free (=\,deviatoric) 
$(d{\ti}d)$-matrices, respectively. For readers' convenience, let us summarize 
the basic notation used in what follows:

\vspace{.7em}

\hspace*{-1.2em}\fbox{
\begin{minipage}[t]{0.42\linewidth}
\small

$d=2,3$ dimension of the problem,

$\SYM:=\{A\in\R^{d\times d};\ A=A^\top\}$, 

$\DEV:=\{A\in\SYM;\ \mathrm{tr}\,A=0\}$, 

$u:Q\to\R^d$ displacement,


$\pi:Q\to\DEV$ plastic strain,

$\zeta:Q\to[0,1]$ damage variable,


$a>0$ activation energy for damage,

$\frM{:}\Snew\to\R^d$  applied traction force,

$g{:}Q\to\R^d$  applied bulk force (as gravity),

$\sigma_\mathrm{el}:Q\to\SYM$ elastic stress,

\end{minipage}\ \
\begin{minipage}[t]{0.52\linewidth}\small

$e_\mathrm{el}:Q\to\SYM$ elastic strain, 

$e=e(u)=e_\mathrm{el}{+}\pi=\frac12\nabla u^\top\!+\frac12\nabla u$ total 

\hspace*{13.5em}small-strain tensor,

$\mathbb C:[0,1]\to\R^{3^4}$ elasticity tensor (dependent on $\zeta$),

$\mathbb H\in\R^{3^4}$ hardening tensor (independent of $\zeta$)


$S\subset\DEV$ the elastic domain (convex, 
int\,$S\ni0$),

$\wD:\Sdir\to\R^d$  prescribed
boundary displacement,

$\kappa_1>0$ scale coefficient of the gradient of plasticity,

$\kappa_2>0$ scale coefficient of the gradient of damage,

$b>0$ activation energy for possible healing.

\end{minipage}\medskip
}

\vspace{-.5em}

\begin{center}
{\small\sl Table\,1.\ }
{\small
Summary of the basic notation used through the paper. 
}
\end{center}

\noindent The {\it state} is formed by the triple $q:=(u,\pi,\zeta)$.
Considering still a (small but fixed) regularizing parameter $\eps>0$,
the governing equation/inclusions read as:
\begin{subequations}\label{plast-dam}
\begin{align}\label{plast-dam1}
&
\mathrm{div}\,\sigma_\mathrm{el}
+g=0\ \ \ \text{ with }\ \sigma_\mathrm{el}=\mathbb C (\zeta)e_\mathrm{el}\ \text{ and }\ 
e_\mathrm{el}=e(u){-}\pi,
&&\!\!\!\!\!\!\!\!\!\!\!\!\text{\sf(momentum equilibrium)}
\\[.2em]\label{plast-dam12}
&
\partial\delta_{S}^*(\DT{\pi})
\ni\mathrm{dev}\,\sigma_\mathrm{el}
-\mathbb H\pi
+\kappa_1\Delta\pi,
&&\text{\sf(plastic flow rule)}
\\[-.2em]\label{plast-dam13}
&\partial\delta_{[-a,b]}^*(\DT\zeta)\ni-\frac12\mathbb C'(\zeta)
e_\mathrm{el}:e_\mathrm{el}
+\kappa_2
\,\mathrm{div}\big(
|\nabla\zeta|^{r-2}\nabla\zeta\big)-N_{[0,1]}(\zeta),
&&\text{\sf(damage flow rule)}
\end{align}\end{subequations}
with $\delta_{S}$ the indicator function to $S$ and $\delta_{S}^*$
its convex conjugate and with ``$\dev$'' denoting the deviatoric part
of a tensor, i.e.\ $\dev\,A:=A-\mathrm{tr}\,A/d$. Here, $[\mathbb C (\zeta)e]_{ij}$
means $\sum_{k,l=1}^d\mathbb C_{ijkl}(\zeta)e_{kl}$.
 
We employed the regularizing term with a regularizing parameter 
$\eps>0$ with an exponent to be assumed suitably big, namely $r>d$. 
This regularization will facilitate analytical well-posedness of 
the problem and, because the gradient-damage term degenerates at 
$\nabla\zeta=0$, its influence is presumably small if $\eps$ is small and 
$\nabla\zeta$ not too large. 

Of course, \eq{plast-dam} is to be completed by appropriate boundary 
conditions, e.g.
\begin{subequations}\label{plast-dam-BC}
\begin{align}\label{plast-dam-BC1}
&&&u=\wD&&\text{on }\GD,&&&&
\\\label{plast-dam-BC+}
&&&
\sigma_\mathrm{el}{\cdot}\vec{n}
=\frM&&\text{on }\GN,
\\&&&\nabla\pi\vec{n}=0\ \ \text{ and }\ \ 
\nabla\zeta{\cdot}\vec{n}=0&&\text{on }\Gamma
\end{align}\end{subequations}
with $\vec{n}$ denoting the unit outward normal to $\Omega$. 
We will consider an initial-value problem 
for \eqref{plast-dam}--\eqref{plast-dam-BC} by asking for 
\begin{align}\label{plast-dam-IC}
u(0)=u_0,\ \ \ \ \ \pi(0)=\pi_0,\ \ \text{ and }\ \ \zeta(0)=\zeta_0.
\end{align}
In fact, as $\DT u$ does not occur in \eqref{plast-dam}, $u_0$ is 
rather formal and its only qualification is to make 
$\scrE(0,u_0,\pi_0,\zeta_0)$ finite not to degrade the energy
balance \eqref{def-ls-engr} on $[0,t]$.

After considering an extension $\uD=\uD(t)$ of 
$\wD(t)$ from \eqref{plast-dam-BC1} on the whole domain $\Omega$,
it is convenient to make a substitution of $u+\uD$ instead of
$u$ into \eqref{plast-dam}--\eqref{plast-dam-BC}, we arrive to 
the problem with time-constant (even homogeneous) Dirichlet boundary
conditions. More specifically, 
\begin{subequations}\label{subst-zero-Dirichlet}
\begin{align}&\label{subst-zero-Dirichlet1}
e_\mathrm{el}\ \,\text{ in \eqref{plast-dam12} replaces by }\
e_\mathrm{el}=e(u{+}\uD){-}\pi,\ \text{ and}
\\&\label{subst-zero-Dirichlet2}
\wD\ \text{ in \eqref{plast-dam-BC1} replaces by }0.
\end{align}\end{subequations}
Assuming $(\mathbb C (\zeta)e(\uD){\cdot}\vec{n}=0$ on $\GN$ for any 
admissible $\zeta$, this transformation will keep $f$ in 
\eqref{plast-dam-BC+} unchanged.

Actually, \eqref{plast-dam12} represents rather the thermodynamical-force
balance governing damage evolution while the corresponding flow rule is written 
rather in the (equivalent) form
\begin{align}\label{flow-pi}
\DT{\pi}
\in N_{S}\big(\mathrm{dev}
\,\sigma_\mathrm{el}-\mathbb H\pi
-\kappa_1\Delta\pi\big)\ \ \ \text{ with }\ \sigma_\mathrm{el}=\mathbb C (\zeta)e_\mathrm{el}
\end{align}
and with $N_{S}$ denoting the set-valued normal-cone mapping to the convex set $S$.
An analogous remark applies to \eqref{plast-dam13}.
The system \eq{plast-dam} with the boundary conditions \eq{plast-dam-BC}
has, in its weak formulation, the structure of an abstract 
Biot-type equation (or here rather inclusion, cf.\ also e.g.\ 
\cite{Biot65MID,MieRou15RIST,Nguy14SRSG}): 
\begin{align}\label{Biot}
\partial_{\DT q}^{}\scrR(\DT q)+\partial_q^{}\scrE(t,q)\ni0
\end{align}
with suitable time-dependent stored-energy functional $\scrE$ and the 
state-dependent \pseudo{potential} of dissipative forces $\scrR$. 
Equally, as already used in \eqref{flow-pi}, one can write \eq{Biot} as a 
generalized gradient flow 
\begin{align}
\DT q\in \partial_{\xi}\scrR^*\big(-\partial\scrE(t,q)\big)
\end{align}
where $\scrR^*$ denotes the conjugate functional. The governing functionals
corresponding to \eqref{plast-dam}--\eqref{plast-dam-BC} after the 
transformation \eqref{subst-zero-Dirichlet} are:
\begin{subequations}\label{eq5:plast-dam}\begin{align}
\nonumber
&\scrE(t,u,\pi,\zeta):=\int_\Omega\!\frac12\mathbb{C}(\zeta)
\big(e(u{+}\uD(t)){-}\pi\big):\big(e(u{+}\uD(t)){-}\pi\big)
+\frac12\mathbb{H}\pi:\pi
\\[-.4em]\label{eq5:plast-dam-E}
&\hspace{8em}+\frac{\kappa_1}2|\nabla\pi|^2\!+\frac{\kappa_2}r|\nabla\zeta|^r\!
+\delta_{[0,1]}(\zeta)-g(t){\cdot}u\dd x-\int_{\GN}\!f(t){\cdot}u\dd S,
\\
&\scrR(\DT{\pi},\DT{\zeta})\equiv\scrR_1(\DT{\pi})+\scrR_2(\DT{\zeta}):=
\int_\Omega \delta_S^*(\DT{\pi})+a\DT\zeta^-+b\DT\zeta^+
\dd x
\label{eq5:plast-dam-R}
\end{align}\end{subequations}
where $z^+:=\max(z,0)$ and $z^-:=\max(-z,0)\ge0$. Note that 
the damage does not affect the hardening, which reflects the 
idea that, on the microscopical level, damage in the material
that underwent hardening develops by evolving microcracks and 
even a completely damaged material consists of micro-pieces 
that bear the hardening energy $\frac12\mathbb{H}\pi:\pi$ stored before.
This model preserves coercivity of hardening even under a complete
damage but the analysis below admits only incomplete damage. 
If $\zeta\mapsto\mathbb C(\zeta)e{:}e$ is strictly convex for any $e\ne0$,
we speak about a cohesive damage which exhibits a certain hardening effect so 
that the needed driving force increases when damage is to be accomplished. 
We can thus model quite a realistic response to various loading experiments, 
as schematically shown on Figure~\ref{fig5:plast-dam} for the case of a 
possible complete damage (whose analysis remains open, however). Note that,
due to the ``incompressibility'' constraint $\tr\,\pi=0$, no
plastification is triggered under a pure tension or compression loading.

\begin{figure}[th]
\begin{center}
\psfrag{TENSION/COMPRESSION}{\small\bf\hspace*{-2em} TENSION/COMPRESSION}
\psfrag{SHEAR LOADING}{\small\bf SHEAR LOADING}
\psfrag{[u]t}{\footnotesize $e=e(u)$}
\psfrag{elasticity}{\footnotesize elasticity}
\psfrag{damage}{\footnotesize damage}
\psfrag{plasticity}{\footnotesize plasticity with hardening}
\psfrag{stress2}{\footnotesize $\sigma_\mathrm{el}$}
\psfrag{sd}{\footnotesize $\displaystyle{\!\sqrt{\frac{2a}{C'(1)}}C(1)}$}
\psfrag{sd2}{\footnotesize $|S|$}
\psfrag{s2}{\footnotesize $|S|/|C(1)|$}
\psfrag{e3}{\footnotesize $\displaystyle{e=\sqrt{\frac{2a}{|C'(0)|}}}$}
\psfrag{e4}{\footnotesize $\displaystyle{e=\sqrt{\frac{2a}{|C'(1)|}}}$}
\psfrag{slope}{\footnotesize \!slope\,=\,$C(1)$}
\psfrag{slope2}{\footnotesize slope\,=\,$\displaystyle{\frac{C(1)H}{C(1){+}H}}$}
\hspace*{-.5em}\vspace*{-1.0em}{\includegraphics[width=.85\textwidth]{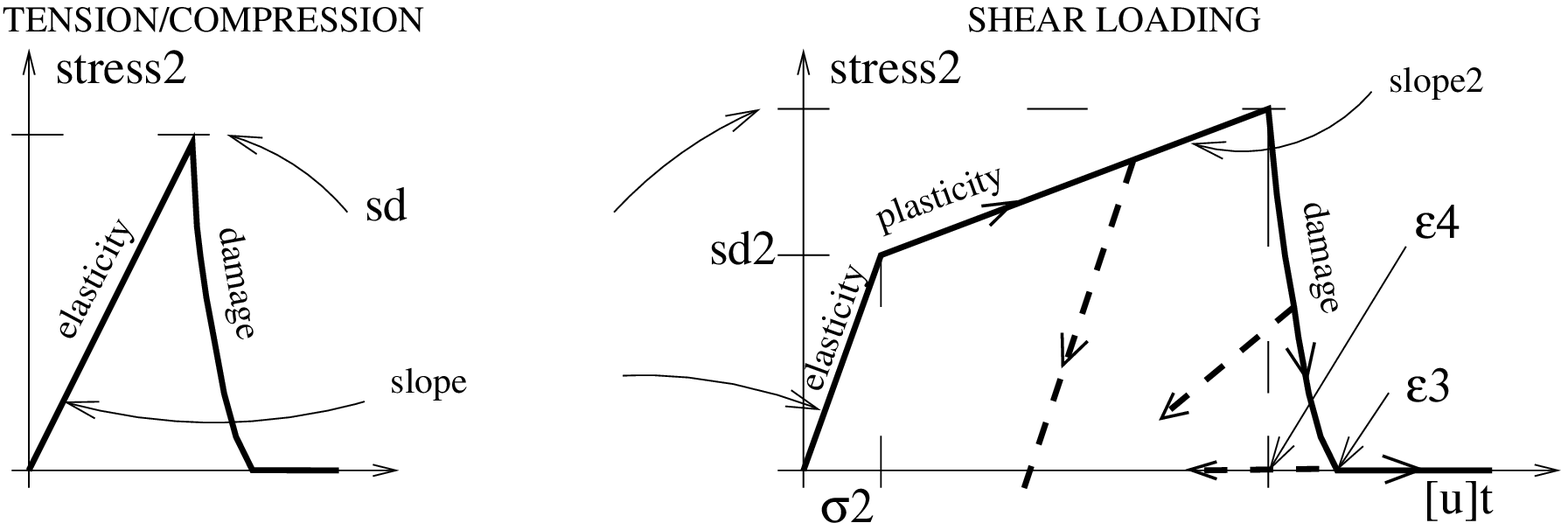}}
\end{center}\vspace*{-.5em}
\caption{\sl Schematic response of the stress $\sigma_\mathrm{el}$ 
to the total strain $e$ during a ``one-dimensional'' tension (left) or shear 
(right) loading experiment under a stress-driven scenario. The latter option 
combines plasticity with eventual complete damage. Dashed lines outline a 
response to unloading, $C=C(\zeta)$ refers to
Young's modulus (left) or the shear modulus (right).}
\label{fig5:plast-dam}
\end{figure}

Let us further note that $(u,\pi)\mapsto\scrE(t,u,\pi,\zeta)$ is smooth 
so that $\pl_q\scrE=\{\scrE_u'\}\times\{\scrE_\pi'\}\times\pl_\zeta\scrE$
with $\scrE_u'$ and $\scrE_\pi'$ denoting the respective partial
G\^ateaux derivatives and \eqref{Biot} can thus be written more specifically 
as the system:
\begin{subequations}\label{Biot-syst}\begin{align}
\scrE_u'(t,u,\pi,\zeta)&=0,\label{Biot-syst1}
\\[.1em]\partial\scrR_1(\DT\pi)+\scrE_\pi'(t,u,\pi,\zeta)&\,\ni\,0,\label{Biot-syst2}
\\[-.1em]\partial\scrR_2(\DT\zeta)+\pl_\zeta\scrE(t,u,\pi,\zeta)&\,\ni\,0.\label{Biot-syst3}
\end{align}\end{subequations}

\begin{remark}[{\sl Irreversible damage in engineering models}]\label{rem-no-healing}
\upshape
Usual engineering models consider $b=\infty$, i.e.\ no healing is allowed.
In fact, due to an essentially missing driving force for healing, 
our modification $b<\infty$ would not have any influence on
the evolution if it were not any $\nabla\zeta$-term in the stored energy.
Thus, if the healing threshold $b$ is big and the gradient-term
coefficient $\kappa_2>0$ is small, we expect to have essentially the
(usually desired) unidirectional evolution as far as the damage concerns.
\end{remark}

\begin{remark}[{\sl Surface variant of the damage/plasticity}]
\upshape
A similar scenario distinguishing tension (which leads to damage without 
plastification) and shear (with plastifying the material before damage) 
as in Figure~\ref{fig5:plast-dam} was used in a surface variant to model 
an adhesive contact distinguishing delamination in the opening and in the 
shearing modes, devised in \cite{RoKrZe13DACM,RoMaPa13QMMD} and later 
implemented by the fractional-step discretisation with checking the 
approximate maximum-dissipation principle in 
\cite{PaMaRo??TACM,RoPaMa??LSAR,VoMaRo14EMDL}. An additional analytical 
difference is that, in contrast to our bulk model here, the surface 
variant allows for irreversible damage that does not need any gradient.
\end{remark}

\begin{remark}[{\sl Other material models}]
\upshape
A separately convex stored energy $\scrE(t,\cdot)$ 
occurs also in other models. E.g., some phenomenological 
models for {\it phase transformations} in (polycrystalline) 
{\it shape-memory materials} \cite{SadBha07MICM} gives
$\zeta$ the meaning of a volume fraction (instead of damage) and 
$\pi$ a transformation strain (or a combination of the plastic and the
transformation strains), and the total strain decomposes 
as $e(u)=e_{\rm el}+\zeta\pi$ rather than \eqref{plast-dam1}
or makes $\pi$ dependent on $\zeta$ (which is then vector-valued). 
Considering the degree-1 homogeneous dissipation potential, most of 
the considerations in this paper can be applied to such a model, too; 
in fact, the only difference would be the nonsmoothness of $\scrE$ 
also with respect to $\pi$ variable. A similar (in general non-convex) model 
have been also considered in \cite{BoFrLe06GEUT,FrBeSe14MMCM,KreSte01ENFT,MieThe04RIHM,SFBBS12TMNT} 
although sometimes special choices of elastic moduli leading to convex 
were particularly under focus while the dissipation is made 
state-dependent.
\end{remark}

\section{\large LOCAL SOLUTIONS}\label{sect-LS}

We will use the standard notation $W^{1,p}(\Omega)$ for the Sobolev space
of functions having the gradient in the Lebesgue space $L^p(\Omega;\R^d)$. 
If valued in $\R^n$ with $n\ge2$, we will write $W^{1,p}(\Omega;\R^n)$, and 
furthermore we use the shorthand notation 
$H^1(\Omega;\R^n)=W^{1,2}(\Omega;\R^n)$.
We also use the notation of ``$\,\cdot\,$'' and ``$\,:\,$'' 
for a scalar product of vectors and 2nd-order tensors, respectively,
and later also ``$\,\Vdots\,$'' for 3rd-order tensors.
For a Banach space $X$, $L^p(I;X)$ will denote the Bochner space of
$X$-valued Bochner measurable functions $u:I\to X$ with its norm
$\|u(\cdot)\|$ in $L^p(I)$, here $\|\cdot\|$ stands for
the norm in $X$. Further, $W^{1,p}(I;X)$ denotes the Banach space of mappings
$u:I\to X$
whose distributional time derivative is in $L^p(I;X)$, while
${\rm BV}(I;X)$ will denote the space of mappings $u:I\to X$
with a bounded variations, i.e.\ $
\sup_{0\le t_0<t_1<...<t_{n-1}<t_n\le T}\sum_{i=1}^n\|u(t_i){-}u(t_{i-1})\|<\infty$
where the supremum is taken over all finite partitions of the interval
$I=[0,T]$. By ${\rm B}(I;X)$ we denote the space of bounded measurable
(everywhere defined) mappings $I\to X$.

The concept of local solutions has been introduced for a special crack
problem in \cite{ToaZan09AVAQ}  and independently also in
\cite{Stef09VCRI},  and further generally investigated in
\cite{Miel11DEMF}. Here, we additionally combine it with the concept of
semi-stability as invented in \cite{Roub09RIPV}.
We adapt the general definition directly to our specific problem,
which will lead to two semi-stability conditions for $\zeta$ and
$\pi$, respectively:

\begin{definition}[Local solutions]\label{def-loc-sln}
We call a measurable mapping
$(u,\pi,\zeta):I\to H^1(\Omega;\R^d)\times H^1(\Omega;\DEV)
\times W^{1,r}(\Omega)$ a local solution to the
elasto-plasto-damage problem \eqref{plast-dam}--\eqref{plast-dam-IC} if the 
initial conditions \eqref{plast-dam-IC} are satisfied and, for some 
$J\subset I$ at most countable (containing
time instances where the solution may possibly jump), it holds that:
\begin{subequations}\label{def-LS}
\begin{align}
&\!\!\!\forall t\!\in\!I{\setminus}J
:\ \ \ \scrE_u'\big(t,u(t),\pi(t),\zeta(t)\big)=0,
\label{def-ls-VI}
\\
&\!\!\!\forall t\!\in\!I{\setminus}J\ \forall\widetilde\pi\!\in\! 
H^1(\Omega;\DEV):\
\scrE\big(t,u(t),\pi(t),\zeta(t)\big)\le\scrE\big(t,u(t),\widetilde\pi,\zeta(t)\big)
+\scrR_1\big(\widetilde\pi{-}\pi(t)\big),
\label{def-ls-semi-stab1}
\\\nonumber
&\!\!\!\forall t\!\in\!I{\setminus}\{0\}\ \forall\widetilde\zeta\!\in\! W^{1,r}(\Omega),\ \,
0\!\le\!\widetilde\zeta\!\le\!1
:\ \
\\&\qquad\qquad\qquad\quad\
\label{def-ls-semi-stab2}
\scrE\big(t,u(t),\pi(t),\zeta(t)\big)\le\scrE\big(t,u(t),\pi(t),\widetilde\zeta\big)
+\scrR_2\big(\widetilde\zeta{-}\zeta(t)\big),
\\\nonumber&\!\!\!\forall0\!\le\!t_1\!\le\!t_2\!\le\!T:\ \ \,
\scrE\big(t_2,u(t_2),\pi(t_2),\zeta(t_2)\big)
+{\rm Diss}_{\scrR_1}^{}\big(\pi;[t_1,t_2]\big)
+{\rm Diss}_{\scrR_2}^{}\big(\zeta;[t_1,t_2]\big)
\\[-.2em]\label{def-ls-engr}&\hspace{11em}
\le\scrE\big(t_1,u(t_1),\pi(t_1),\zeta(t_1)\big)+\!\int_{t_1}^{t_2}\!\!
\scrE_t'(t,u(t),\pi(t),\zeta(t))\,\d t,
\end{align}\end{subequations}
where 
\begin{subequations}\label{def-LS-Diss}
\begin{align}
&\Diss_{\scrR_1}^{}(\pi;[r,s]):=
\!\!\!\!\sup_{\substack{N\in\N\\[.1em]r\leq t_0^{}<t_1^{}<\cdots<t_{N-1}^{}<t_N^{}\le s}}
\sum_{j=1}^N\int_\Omega
\delta_{S}^*(\pi(t_{j-1}){-}\pi(t_j))
\dd x,
\ \ \ \text{ and similarly }
\\&\nonumber
\Diss_{\scrR_2}^{}(\zeta;[r,s]):=
\!\!\!\!\sup_{\substack{N\in\N\\[.1em]r\leq t_0^{}<t_1^{}<\cdots<t_{N-1}^{}<t_N^{}\le s}}
\sum_{j=1}^N
\int_\Omega a(\pi(t_{j-1}){-}\pi(t_j))^-+b(\pi(t_{j-1}){-}\pi(t_j))^+\dd x.
\\[-1.7em]
\end{align}\end{subequations}
\end{definition}

Let us comment the above definition briefly. Obviously, \eqref{plast-dam1} 
after transforming the boundary condition \eq{subst-zero-Dirichlet} means 
precisely \eqref{def-ls-VI},  which more in detail here means that
$\int_{\Omega}\mathbb{C}(\zeta(t))(e(u(t){-}\wD(t))-\pi){:}e(v)\,\d x
=\int_{\Omega}g{\cdot}v\,\d x+\int_{\GN}f{\cdot}v\,\d S$
for all $v\!\in\!H^1(\Omega;\R^d)$ with $v|_{\GD}=0$, i.e.\ the weakly 
formulated Euler-Lagrange equation for displacement.
Note that \eqref{def-ls-VI} specifies also the boundary conditions
for $u$, namely $u=0$ on $\GD$ because otherwise
$\scrE(t,u,\pi,\zeta)=\infty$ would violate \eqref{def-ls-VI}
for $v$ which satisfies $v=0$ on $\GD$, and also
$\sigma_{\rm el}\cdot\vec{n}
=f$ on $\GN$ can be proved by standard arguments 
based on Green's theorem. Equivalently, one can merge \eqref{def-ls-VI}
with \eqref{def-ls-semi-stab1} to a single condition
\begin{align}\nonumber
&\!\!\!
\forall t\!\in\!I{\setminus}J\ \forall(\widetilde u,\widetilde\pi)\!\in\! 
H^1(\Omega;\R^d)\times H^1(\Omega;\DEV),\ \ \widetilde u|_{\GD}^{}=0:\quad
\\&\qquad\qquad\qquad\qquad\qquad\qquad\quad
\scrE\big(t,u(t),\pi(t),\zeta(t)\big)\le\scrE\big(t,\widetilde u,\widetilde\pi,\zeta(t)\big)
+\scrR_1\big(\widetilde\pi{-}\pi(t)\big);
\label{def-ls-semi-stab1+}
\end{align}
which reveals that Definition~\ref{def-loc-sln} just copies 
the concept of local solutions from \cite{Stef09VCRI,ToaZan09AVAQ} 
here generalized for the case of non-vanishing dissipation $\scrR_1\ne0$.
As $\scrR_1$ is homogeneous degree-1, 
always $\partial\scrR_1(\DT\pi)\subset\partial\scrR_1(0)$ and thus 
\eqref{Biot-syst2} implies
$\partial\scrR_1(0)+\partial_\pi\scrE(u,\pi,\zeta)\ni0$.
From the convexity of $\scrR_1$ when taking into account
that $\scrR_1(0)=0$, the latter inclusion is equivalent to
$\scrR_1(v)+\langle\partial_\pi\scrE(u(t),\pi(t),\zeta(t)),v\rangle\ge0$
for any $v\in H^1(\Omega;\DEV)$. Substituting $v=\widetilde z-z(t)$ and using
the convexity of
$\scrE(t,u,\zeta,\cdot)$, we obtain the \emph{semi-stability}
\eqref{def-ls-semi-stab1}  of $\pi$ at time $t$.
Analogously, we obtain also \eqref{def-ls-semi-stab2} from 
\eqref{Biot-syst3}; note that we do not require its validity at $t=0$
so that we do not need to qualify the initial conditions as far as 
any (semi)stability concerns.
Eventually, \eqref{def-ls-engr} is the (im)balance of the mechanical
energy. Note that, in view of \eq{eq5:plast-dam-E}, the last term in 
\eqref{dam-ls-semi-engr} involves
\begin{align*}
\scrE_t'(t,u,\pi,\zeta)
=\int_\Omega\!\mathbb{C}(\zeta)\big(e(\DTuD)\big):
\big(e(u{+}\uD){-}\pi\big)-\DT g(t){\cdot}u\dd x
-\int_{\GN}\!\!\DT f(t){\cdot}u\dd S.
\end{align*}
This is equivalent (or, if $\scrE(t,\cdot,\cdot,\cdot)$ is not
smooth, slightly generalizes) 
the standard definition of the {\it weak solution}
to the initial-boundary-value problem \eqref{plast-dam}--\eqref{plast-dam-IC}, 
cf.\ \cite[Prop.\,2.3]{Roub15MDLS} for details.

To be more precise, the concept of local solutions as used in
\cite{Miel11DEMF,ToaZan09AVAQ} requires $J$ only to have a zero Lebesgue
measure and also \eqref{def-ls-semi-stab2} is valid
only for a.a.\ $t$. On the other hand, conventional weak solutions
allow even \eqref{def-ls-engr} holding only for a.a.\ $t_1$ and $t_2$.
Later, our approximation method will provide convergence to this
slightly stronger local solutions, which motivates us to have tailored
Definition~\ref{def-loc-sln} straight to our results.

Actually, local solutions form essentially the largest
reasonable class of solutions for rate-independent systems as 
\eqref{plast-dam}--\eqref{plast-dam-IC} considered here. It includes
the mentioned energetic solutions \cite{Miel11DEMF,MieThe04RIHM},
the vanishing-viscosity solutions, the balanced-viscosity (so-called BV)
solutions, parametrized solutions, etc.; cf.\ \cite{Miel11DEMF,MieRou15RIST} 
for a survey, and also stress-driven-like solutions obeying 
maximum-dissipation principle in some sense, cf.\ Remark~\ref{rem-MDP}. The 
energetic solutions have often tendency
to undergo damage unphysically early; cf.\
\cite{VoMaRo14EMDL} for a comparison on several computational experiments
on a similar type of problem.
The approximation method we will use in this article leads rather to 
the stress-driven option, cf.\ Remarks~\ref{rem-MDP} and \ref{rem-DMP} below.

\begin{remark}[{\sl Maximum-dissipation principle}]\label{rem-MDP}
\upshape
The degree-1 homogeneity of $\scrR_1$ and $\scrR_2$ defined in
\eqref{eq5:plast-dam-R} allows for further interpretation of the flow
rules \eqref{Biot-syst2} and \eqref{Biot-syst3}. Using maximal-monotonicity of 
the subdifferential, \eqref{Biot-syst2} means just that
$\langle\widetilde{\xi}-\xi_\mathrm{plast},v-\DT\pi\rangle\ge0$
for any $v$ and any $\widetilde{\xi}\in\partial\scrR_1(v)$
with the 
driving force $\xi_\mathrm{plast}=-\scrE_\pi'(t,u,\pi,\zeta)$.
In particular, for $v=0$, defining the convex ``elastic domain'' 
$K_1=\partial\scrR_1(0)$, one obtains
\begin{subequations}\label{max-dis-principle}
\begin{align}\label{max-dis-principle-a}
\big\langle \xi_\mathrm{plast}(t),\DT\pi(t)\big\rangle
=\max_{\widetilde{\xi}\in K_1}\big\langle\widetilde{\xi},
\DT\pi(t)\big\rangle\ \ \ \text{ with some }\ \ \xi_\mathrm{plast}(t)=-
\scrE_\pi'(t,u(t),\pi(t),\zeta(t)).
\end{align}
To derive it, we have used that
$\xi_\mathrm{plast}\in\partial\scrR_1(\DT\pi)\subset\partial\scrR_1(0)=K_1$
thanks to the degree-0 homogeneity of
$\partial\scrR_1$, so that always $\langle \xi_\mathrm{plast},\DT\pi\rangle
\le\max_{\widetilde{\xi}\in K_1}\langle\widetilde{\xi},\DT\pi\rangle$. The 
identity \eqref{max-dis-principle-a} says that the
dissipation due to the driving force $\xi_\mathrm{plast}$
is maximal provided that the order-parameter rate $\DT\pi$ is 
kept fixed, while the vector of possible driving
forces $\widetilde{\xi}$ varies freely over all admissible driving
force from $K_1$. This just resembles the so-called Hill's
\emph{maximum-dissipation principle} articulated just for plasticity in
 \cite{Hill48VPMP}. Also it says that the rates are orthogonal
to the elastic domain $K_1$, known as an
orthogonality principle \cite{Zieg58AGOP}.
Actually, R.\,Hill \cite{Hill48VPMP} used it for a situation
where $\scrE(t,\cdot)$ is convex while, in a general nonconvex case
as also here when damage is considered, it holds only along absolutely 
continuous paths (i.e.\ in stick or slip regimes) which are sufficiently
regular in the sense $\DT\pi$ is valued not only in 
$L^1(\Omega;\DEV)$ but also in $H^1(\Omega;\DEV)^*$ 
while it does certainly not need to hold during jumps. 
Analogously it holds also for $\zeta$, defining
$K_2:=\partial\scrR_2(0)$, that
\begin{align}\label{max-dis-principle-b}
\big\langle \xi_\mathrm{dam}(t),\DT\zeta(t)\big\rangle
=\max_{\widetilde{\xi}\in K_2}\big\langle\widetilde{\xi},
\DT\zeta(t)\big\rangle\ \ \ \text{ with some }\ \ 
\xi_\mathrm{dam}(t)\in-\partial_\zeta\scrE(t,u(t),\pi(t),\zeta(t)).
\end{align}\end{subequations}
Here, $\partial_\zeta\scrE(t,u,\pi,\zeta)$ is set-valued
and its elements should be understood as  ``available'' 
driving forces not necessarily falling into $K_2$, while 
$\xi_\mathrm{dam}\in K_2$ is in a position of an ``actual'' driving force 
realized during the actual evolution. As $\scrE(t,u,\cdot,\zeta)$ is smooth,
the maximum-dissipation relation \eqref{max-dis-principle-a}
written in the form
$\langle-\scrE'_\pi(t,u(t),\pi(t),\zeta(t)),\DT\pi(t)\rangle
=\max\langle K_1, \DT \pi(t)\rangle=\scrR_1(\DT\pi(t))$
summed with the semistability \eqref{def-ls-semi-stab1}
which can be written in the form $\scrR_1(\widetilde\pi)
+\langle\scrE'_\pi(t,u(t),\pi(t),\zeta(t)),\widetilde\pi\rangle\ge0$
thanks to the convexity of $\scrE(t,u,\cdot,\zeta)$ yields
\begin{align}
\scrR_1(\widetilde\pi)
+\langle\scrE'_\pi(t,u(t),\pi(t),\zeta(t)),\widetilde\pi- \DT \pi(t) \rangle
\ge\scrR_1(\DT\pi(t))
\end{align}
for any $\widetilde\pi$, which just means that
$\xi_\mathrm{plast}(t)=-\scrE'_\pi(t,u(t),\pi(t),\zeta(t))
\in\partial\scrR_1(\DT\pi(t))$, cf.\ \eqref{Biot-syst2}. This exactly means 
that the evolution of $\pi$ is {\it governed by a thermodynamical driving force}
$\xi_\mathrm{plast}$  (we say that it is ``stress-driven'')  and it reveals
the role of the  maximum-dissipation principle in combination
with semistability. Using the convexity of $\scrE(t,u,\pi,\cdot)$,
a similar argument can be applied for
\eqref{max-dis-principle-b} in combination with semistability
\eqref{def-ls-semi-stab2} even if $\scrE(t,u,\pi,\cdot)$ is not
smooth. 
\end{remark}

\begin{remark}[{\sl Integrated maximum-dissipation principle}]
\label{rem-IMDP}
\upshape
Let us emphasize that, in general, $\DT\pi$ and $\DT\zeta$ are measures
possibly having singular parts concentrated at times when rupture occurs 
and the solution and also the driving forces need not be continuous.
Even if $\DT\pi$ and $\DT\zeta$ are absolutely continuous, in
our infinite-dimensional case the driving forces need not be in duality
 with them, as already mentioned in Remark~\ref{rem-MDP}. 
So \eqref{max-dis-principle} is analytically not justified in any sense. 
For this reason,
an {\it Integrated} version of the {\it Maximum-Dissipation
Principle} (IMDP) was devised in \cite{Roub15MDLS} for a bit simpler
case involving only one maximum-dissipation relation.
Realizing that
$\max_{\widetilde{\xi}\in K_1}\langle\widetilde{\xi},
\DT\pi\rangle=\scrR_1(\DT\pi)$ and
similarly $\max_{\widetilde{\xi}\in K_2}\langle\widetilde{\xi},
\DT\zeta\rangle=\scrR_2(\DT\zeta)$, the integrated version of
\eqref{max-dis-principle} reads here as:
\begin{subequations}\label{IMDP}\begin{align}\label{IMDP-a}
&\!\int_{t_1}^{t_2}\!\!\!\xi_\mathrm{plast}(t)\,\d\pi(t)
=\!\int_{t_1}^{t_2}\!\!\!\scrR_1(\DT\pi)\,\d t\ \ \ 
&&\text{with }\ \
\xi_\mathrm{plast}(t)=-\scrE_\pi'(t,u(t),\pi(t),\zeta(t)),\
&&\\\label{IMDP-b}
&\!\!\int_{t_1}^{t_2}\!\!\!\xi_\mathrm{dam}(t)\,\d\zeta(t)
=\!\int_{t_1}^{t_2}\!\!\!\scrR_2(\DT\zeta)\,\d t\ \ \ 
&&\text{with some }\ \:
\xi_\mathrm{dam}(t)\!\in\!-\partial_\zeta\scrE(t,u(t),\pi(t),\zeta(t))\
&&\end{align}\end{subequations}
to be valid for any $0\le t_1<t_2\le T$. This definition is inevitably
a bit technical and, without sliding too much into details, let us
only mention that the left-hand-side integrals in \eqref{IMDP}
are the lower Riemann-Stieltjes integrals suitably generalized, 
defined by limit superior of lower Darboux sums, i.e.\ 
\begin{align}\label{def-of-RS-int}
\int_r^s\!\xi(t)\,\d z(t)
:=\!\!
\limsup_{\substack{N\in\N\\[.1em]r=t_0^{}<t_1^{}<...<t_{N-1}^{}<t_N^{}=s^{^{}}}}\,
\sum_{j=1}^{N}\,\inf_{t\in[t_{j-1},t_j]}
\big\langle
\xi(t),z(t_j){-}z(t_{j-1})
\big\rangle,
\end{align}
relying on that the values of $\xi$ are in duality with values of $z$ (but
not necessarily of $\DT z$) and on that the collection of finite partitions 
of the interval $[r,s]$ forms a directed set when ordered by inclusion so
that ``limsup'' in \eqref{def-of-RS-int} is well defined. Let us mention 
that the conventional definition uses ``sup'' instead of ``limsup'' but
restricts only to scalar-valued $\zeta$ and $z$ with $z$ non-decreasing.
The limit-construction \eqref{def-of-RS-int} is called a (here \emph{lower}) 
\emph{Moore-Pollard-Stieltjes} integral \cite{Moor15DLGI,Poll23SIG}
used here for vector-valued functions in duality,
which is a very special case of a so-called multilinear Stieltjes
integral. Like in the mentioned classical scalar situation 
of lower Riemann-Stieltjes integral using ``sup'' instead of ``limsup'',
the sub-additivity of the integral with respect to $u$ and to $v$ holds, 
as well as additivity  with respect to the
domain holds.


The right-hand-side integrals in \eqref{IMDP} are just the integrals of 
measures and equal to 
$\Diss_{\scrR_1}^{}(\pi;[t_1,t_2])$ and $\Diss_{\scrR_2}^{}(\zeta;[t_1,t_2])$,
respectively. Equivalently, in view of the definition \eqref{def-LS-Diss},
they can be also written as $\int_{t_1}^{t_2}\!\scrR_1\,\d\pi(t)$ and 
$\int_{t_1}^{t_2}\!\scrR_2\,\d\zeta(t)$, where the integrals 
can again be understood as the lower Moore-Pollard-Stieltjes integrals
(or here even simpler as the mentioned lower Riemann-Stieltjes integrals)
modified for the case that the time-dependent linear functionals $\xi$
are replaced by nonlinear but time-constant and 1-homogeneous convex 
functionals $\scrR$'s.
Alternatively, though not equivalently, 
denoting the internal variables $z=(\pi,\zeta)$, the 
IMDP \eqref{IMDP} can be written 
``more compactly'' as
\begin{align}\label{IMDP+}
\!\int_{t_1}^{t_2}\!\!\xi(t)\,\d z(t)
=\!\int_{t_1}^{t_2}\!\!\scrR\,\d z(t)\ \ \ 
\ \ \ \ \text{ with some }\ \:
\xi(t)\!\in\!-\partial_z\scrE(t,u(t),z(t)).
\end{align}

Both IMDP \eqref{IMDP} or \eqref{IMDP+} are satisfied on any interval 
$[t_1,t_2]$ where the solution to \eqref{Biot-syst} is absolutely continuous 
with sufficiently regular time derivatives; then the integrals in \eqref{IMDP}
are the conventional Lebesgue integrals, in particular the left-hand
sides in \eqref{IMDP} are  
$\int_{t_1}^{t_2}\langle\xi_\mathrm{plast}(t),\DT\pi(t)\rangle\,\d t$
and $\int_{t_1}^{t_2}\langle\xi_\mathrm{dam}(t),\DT\zeta(t)\rangle\,\d t$, 
respectively. The particular importance of IMDP is  especially  at jumps, 
i.e.\ at times when abrupt damage possibly happens.
It is shown in \cite{MieRou15RIST,Roub15MDLS} on various
finite-dimensional examples of ``damageable springs'' that this
IMDP can identify too early rupturing local solutions  when the
driving force is obviously unphysically low (which occurs quite typically  
in particular  within  the energetic  solutions of systems
governed by nonconvex potentials like here) and its satisfaction for 
left-continuous local solutions indicates that the evolution is stress driven,
as explained in Remark~\ref{rem-MDP}. On the other hand, it does not need to 
be satisfied even in physically well justified stress-driven local solutions. 
For example, it happens if two springs with different fracture toughness 
organized in parallel rupture at the same time, cf.\ 
\cite[Example~4.3.40]{MieRou15RIST}, although even in this situation
our algorithm \eqref{dam-small-II-LS-min} below will give 
a correct approximate solution, cf.~Figure~\ref{fig:residua_plus_diss_energy}
below. Therefore, even the IMDP \eqref{IMDP} may serve only as a 
sufficient aposteriori condition whose satisfaction verifies 
the obtained local solution as a physically relevant in the sense
that it is stress driven but its dissatisfaction does not mean anything.
Eventually, let us realize that, as a consequence of the mentioned 
definitions, we have 
\begin{subequations}\begin{align}
\label{sub-aditivity}
&\int_{t_1}^{t_2}\!\!\!\xi_\mathrm{plast}(t)\,\d\pi(t)
+\int_{t_1}^{t_2}\!\!\!\xi_\mathrm{dam}(t)\,\d\zeta(t)\le
\int_{t_1}^{t_2}\!\!\!\xi(t)\,\d(\pi,\zeta)(t)\ \ \text{ and}
\\&\int_{t_1}^{t_2}\!\!\scrR_1\,\d\pi(t)
+\int_{t_1}^{t_2}\!\!\!\scrR_2\,\d\zeta(t)=
\int_{t_1}^{t_2}\!\!\!\big(\scrR_1{+}\scrR_2\big)\,\d(\pi,\zeta)(t).
\end{align}\end{subequations}
As there is only inequality in \eqref{sub-aditivity}, the IMDP 
\eqref{IMDP+} is less selective than \eqref{IMDP} in general. Moreover, 
we will rely rather on some approximation of IMDP, cf.\
Remarks~\ref{rem-DMP} and \ref{rem-MDP-recovery} below.
\end{remark}

\section{\large SEMI-IMPLICIT TIME DISCRETISATION
AND ITS CONVERGENCE}
\label{sec-disc}

To prove existence of local solutions,
we use
a constructive method relying on a suitable time discretisation and
the weak compactness of level sets of the minimization problems
arising at each time level. When further discretised in space, it will later
in Sect.~\ref{sect-BEM} yield a computer implementable efficient
algorithm. Let us summarize the assumption on the data of the 
original continuous problem:
\begin{subequations}\label{ass}\begin{align}\label{ass-C}
&\mathbb C(\cdot),\mathbb H\in\R^{d\times d\times d\times d}\text{ positive definite, symmetric},\ \ \mathbb C:[0,1]\to\R^{d\times d\times d\times d}
\text{ continuous},
\\&a,\,b,\,\kappa_1,\kappa_2
>0,\ \ S\subset\DEV\text{ convex, bounded, closed,\ \ int\,$S\ni0$},
\\\label{ass-BC}
&\wD\in W^{1,1}(I;W^{1/2,2}(\GD;\R^d)),\ \ 
\\[-.3em]
&g\in W^{1,1}(I;L^p(\Omega;\R^d))\ \ \ \text{ with }
\ \ p\begin{cases}>1&\text{for }d=2,\\[-.3em]
=2d/(d{+}2)\!&\text{for }d\ge3\end{cases}\displaybreak
\\[-.1em]
&f\in W^{1,1}(I;L^p(\GN;\R^d))\ \ \ \text{ with }
\ p\begin{cases}>1&\text{for }d=2,\\[-.3em]
=2{-}2/d\ \ \ &\text{for }d\ge3,\end{cases}
\\\label{ass-IC}
&(u_0,\pi_0,\zeta_0)\in H^1(\Omega;\R^d){\times}H^1(\Omega;\DEV){\times}
W^{1,r}(\Omega).
\end{align}\end{subequations}
The qualification \eqref{ass-BC} allows for an extension
$\uD$ of $\wD$ which belongs to $W^{1,1}(I;H^1(\Omega;\R^d))$;
in what follows, we will consider some extension with this property.

For the mentioned time discretisation, we use an equidistant partition
of the time interval $I=[0,T]$ with a time step $\tau>0$, assuming
$T/\tau\in\N$, and denote $\{u_\tau^k\}_{k=0}^{T/\tau}$ an approximation
of the desired values $u(k\tau)$, and similarly $\zeta_\tau^k$ is to
approximate $\zeta(k\tau)$, etc.

We use a decoupled semi-implicit time discretisation with the
\emph{fractional steps} based on the splitting
of the state variables governed by the separately-convex
character of $\scrE(t,\cdot,\cdot,\cdot)$. This will make the
numerics considerably easier than any other splitting and simultaneously
may lead  to a physically relevant solutions governed rather by
stresses (if the maximum-dissipation principle  holds at least approximately
in the sense of Remark~\ref{rem-DMP} below) than by energies and will prevent 
too-early debonding, as already announced in Section~\ref{sect-LS}.
More specifically, exploiting the convexity of both
$\scrE(t,\cdot,\cdot,\zeta)$ and $\scrE(t,u,\pi,\cdot)$ and 
the additivity $\scrR=\scrR_1(\DT\pi)+\scrR_2(\DT\zeta)$,
this splitting will be considered as $(u,\pi)$
and $\zeta$. This yields alternating convex minimization.
Thus, for $(\pi_\tau^{k-1},\zeta_\tau^{k-1})$ given, we obtain two
minimization problems
\begin{subequations}\label{dam-small-II-LS-min}
\begin{align}\label{minimize-1}
\left.\begin{array}{ll}
\text{minimize}&\scrE_\tau^k(u,\pi,\zeta_\tau^{k-1})+\scrR_1(\pi{-}\pi_\tau^{k-1})
\\\text{subject to}&(u,\pi)\in H^1(\Omega;\R^d)\times H^1(\Omega;\DEV),
\ \ u|_{\GD}^{}=0,
\end{array}\ \right\}
\intertext{with $\scrE_\tau^k:=\scrE(k\tau,\cdot,\cdot,\cdot)$
and, denoting the unique solution as $(u_\tau^k,\pi_\tau^k)$,}
\label{minimize-2}
\left.\begin{array}{ll}
\text{minimize}&\scrE_\tau^k(u_\tau^k,\pi_\tau^k,\zeta)
+\scrR_2(\zeta{-}\zeta_\tau^{k-1})
\\\text{subject to}&\zeta\in W^{1,r}(\Omega),\ \ 0\le\zeta\le1,
\end{array}\hspace{8em}\right\}
\end{align}
\end{subequations}
and denote its (possibly not unique) solution by $\zeta_\tau^k$.
Existence of the discrete solutions $(u_\tau^k,\pi_\tau^k,\zeta_\tau^k)$ is 
straightforward by the mentioned compactness arguments. 

We define the piecewise-constant interpolants
\begin{align}\label{def-of-interpolants}
\left.\begin{array}{l}
\baru_\tau(t)= u_\tau^k\,\ \ \ \&\ \ \ \underline u_\tau(t)= u_\tau^{k-1},
\\[.2em]\barpi_\tau(t)=\pi_\tau^k\ \ \ \&\ \ \ \underline\pi_\tau(t)=\pi_\tau^{k-1},
\\[.2em]\barzeta_\tau(t)=\zeta_\tau^k\,\ \ \ \&\ \ \
\underline\zeta_\tau(t)=\zeta_\tau^{k-1},
\\[.2em]\bar\scrE_\tau(t,u,\pi,\zeta)=\scrE_\tau^k(u,\pi,\zeta)
\end{array}\ \right\}\text{ for }(k{-}1)\tau<t\le k\tau.
\end{align}
Later in Remark~\ref{rem-DMP}, we will also use the piecewise affine
interpolants
\begin{align}\label{def-of-interpolants+}
\left.\begin{array}{l}
\pi_\tau(t)=\frac{t-(k{-}1)\tau}\tau\pi_\tau^k
+\frac{k\tau-t}\tau\pi_\tau^{k-1},
\\[.2em]\zeta_\tau(t)=\frac{t-(k{-}1)\tau}\tau\zeta_\tau^k
+\frac{k\tau-t}\tau\zeta_\tau^{k-1}
\end{array}\ \right\}\text{ for }(k{-}1)\tau<t\le k\tau.
\end{align}

The important attribute of the discretisation \eqref{dam-small-II-LS-min}
is also its numerical stability and satisfaction of a suitable discrete analog
of \eqref{def-LS}, namely:

\begin{proposition}[Stability of the time discretisation]
Let \eqref{ass} hold and, in terms of the interpolants 
\eqref{def-of-interpolants}, $(\baru_\tau,\barpi_\tau,\barzeta_\tau)$ be 
an approximate solution obtained by \eqref{dam-small-II-LS-min}.
Then, the following a-priori estimates hold
\begin{subequations}\label{dam-ls-estimates}
\begin{align}\label{dam-ls-estimate1}
&\big\|\baru_\tau\big\|_{ L^\infty(I; H^1(\Omega;\R^d))}
\le C,&&
\\\label{dam-ls-estimate3}
&\big\|\barpi_\tau\big\|_{ L^\infty(I; H^1(\Omega;\DEV))
\cap{\rm BV}(I; L^1(\Omega;\DEV))}\le C,
\\\label{dam-ls-estimate2}
&\big\|\barzeta_\tau\big\|_{ L^\infty(\Omega)
\cap{\rm BV}(I; L^1(\Omega))}\le C.
&&
\end{align}\end{subequations}
Moreover, the obtained approximate solution satisfies for any 
$t\in I{\setminus}\{0\}$ the (weakly formulated) Euler-Lagrange equation for 
the displacement:
\begin{subequations}\label{dam-ls}
\begin{align}
&
\scrE_u'\big(t_\tau,\baru_\tau(t),\barpi_\tau(t),\underline\zeta_\tau(t)\big)
=0,
\label{dam-ls-VI}
\intertext{with $t_\tau\!:=\min\{k\tau\!\ge\!t;\ k\!\in\!\N\}$,
two separate semi-stability conditions for $\barzeta_\tau$ and $\barpi_\tau$:}
&\forall\widetilde\pi\!\in\! H^1(\Omega;\DEV):
\label{dam-ls-semi-stab1}
\
\scrE\big(t_\tau,\baru_\tau(t),\barpi_\tau(t),\underline\zeta_\tau(t)\big)
\le\scrE\big(t_\tau,\baru_\tau(t),\widetilde\pi,\underline\zeta_\tau(t)\big)
+\scrR_1\big(\widetilde\pi{-}\barpi_\tau(t)\big),
\\\nonumber
&\forall\widetilde\zeta\!\in\! W^{1,r}(\Omega),\ 
0\!\le\!\widetilde\zeta\!\le\!
1:\ \,
\\
\label{dam-ls-semi-stab2}
&\hspace{8.3em}
\scrE\big(t_\tau,\baru_\tau(t),\barpi_\tau(t),\barzeta_\tau(t)\big)
\le\scrE\big(t_\tau,\baru_\tau(t),\barpi_\tau(t),\widetilde\zeta\big)
+\scrR_2\big(\widetilde\zeta{-}\barzeta_\tau(t)\big),
\intertext{and, for all $0\le t_1<t_2\le T$ of the 
form $t_i=k_i\tau$ for some $k_i\!\in\!\N$, the energy (im)balance:}
\nonumber&
\scrE\big(t_2,\baru_\tau(t_2),\barpi_\tau(t_2),\barzeta_\tau(t_2)\big)
+\Diss_{\scrR_1}^{}\big(\barpi_\tau;[t_1,t_2]\big)
+\Diss_{\scrR_2}\big(\barzeta_\tau;[t_1,t_2]\big)
\\[-.3em]\label{dam-ls-semi-engr}&\hspace{10em}
\le
\scrE\big(t_1,\baru_\tau(t_1),\barpi_\tau(t_1),\barzeta_\tau(t_1)\big)
+\!\int_{t_1}^{t_2}\!\!
\scrE_t'\big(t,\baru_\tau(t),\barpi(t),\barzeta(t))\,\d t.
\end{align}\end{subequations}
\end{proposition}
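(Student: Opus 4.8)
The plan is to extract all four claims from the minimization structure of \eqref{dam-small-II-LS-min}, exploiting that each subproblem is a genuine convex minimization and that the two potentials $\scrR_1,\scrR_2$ are $1$-homogeneous.

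First I would derive \eqref{dam-ls-VI} and the two semi-stability conditions directly from optimality. Minimality of $(u_\tau^k,\pi_\tau^k)$ in \eqref{minimize-1} against competitors $(u,\pi_\tau^k)$ and against $(u_\tau^k,\widetilde\pi)$ gives, on the one hand, $\scrE_u'(k\tau,u_\tau^k,\pi_\tau^k,\zeta_\tau^{k-1})=0$ (this uses smoothness of $\scrE$ in $u$ and that $\scrR_1$ does not see $u$), and on the other hand the discrete semi-stability $\scrE_\tau^k(u_\tau^k,\pi_\tau^k,\zeta_\tau^{k-1})\le\scrE_\tau^k(u_\tau^k,\widetilde\pi,\zeta_\tau^{k-1})+\scrR_1(\widetilde\pi-\pi_\tau^{k-1})$; subtracting $\scrR_1(\pi_\tau^k-\pi_\tau^{k-1})$ and using the triangle inequality $\scrR_1(\widetilde\pi-\pi_\tau^{k-1})\le\scrR_1(\widetilde\pi-\pi_\tau^k)+\scrR_1(\pi_\tau^k-\pi_\tau^{k-1})$ (valid since $\scrR_1$ is subadditive) yields \eqref{dam-ls-semi-stab1} in interpolant form. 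The identical argument applied to \eqref{minimize-2} yields \eqref{dam-ls-semi-stab2}, now with the argument of $\scrE$ being $(\baru_\tau,\barpi_\tau,\barzeta_\tau)$ because the $\zeta$-update uses the already-updated $(u_\tau^k,\pi_\tau^k)$.

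Next I would prove the discrete energy imbalance \eqref{dam-ls-semi-engr}. The key inequality is the two-step comparison at each level $k$: testing \eqref{minimize-1} by the competitor $(u_\tau^{k-1},\pi_\tau^{k-1})$ gives $\scrE_\tau^k(u_\tau^k,\pi_\tau^k,\zeta_\tau^{k-1})+\scrR_1(\pi_\tau^k-\pi_\tau^{k-1})\le\scrE_\tau^k(u_\tau^{k-1},\pi_\tau^{k-1},\zeta_\tau^{k-1})$, and testing \eqref{minimize-2} by $\zeta_\tau^{k-1}$ gives $\scrE_\tau^k(u_\tau^k,\pi_\tau^k,\zeta_\tau^k)+\scrR_2(\zeta_\tau^k-\zeta_\tau^{k-1})\le\scrE_\tau^k(u_\tau^k,\pi_\tau^k,\zeta_\tau^{k-1})$. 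Adding them, $\scrE_\tau^k(u_\tau^k,\pi_\tau^k,\zeta_\tau^k)+\scrR_1(\pi_\tau^k-\pi_\tau^{k-1})+\scrR_2(\zeta_\tau^k-\zeta_\tau^{k-1})\le\scrE_\tau^k(u_\tau^{k-1},\pi_\tau^{k-1},\zeta_\tau^{k-1})$. Now I would write $\scrE_\tau^k(u_\tau^{k-1},\pi_\tau^{k-1},\zeta_\tau^{k-1})-\scrE_\tau^{k-1}(u_\tau^{k-1},\pi_\tau^{k-1},\zeta_\tau^{k-1})=\int_{(k-1)\tau}^{k\tau}\scrE_t'(t,u_\tau^{k-1},\pi_\tau^{k-1},\zeta_\tau^{k-1})\,\d t$ by the fundamental theorem of calculus (the only $t$-dependence of $\scrE$ is through $\uD(t)$, $g(t)$, $f(t)$, which is absolutely continuous by \eqref{ass-BC}), then sum over $k$ from $k_1+1$ to $k_2$; the left telescopes to $\scrE(t_2,\dots)-\scrE(t_1,\dots)$ plus the accumulated dissipation terms, and the accumulated $\scrR_1,\scrR_2$ sums are, by definition \eqref{def-LS-Diss}, exactly $\Diss_{\scrR_1}(\barpi_\tau;[t_1,t_2])$ and $\Diss_{\scrR_2}(\barzeta_\tau;[t_1,t_2])$ (since for piecewise-constant interpolants the supremum over partitions is attained at the nodal partition). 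This gives \eqref{dam-ls-semi-engr}.

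Finally the a-priori estimates \eqref{dam-ls-estimates} follow from \eqref{dam-ls-semi-engr} with $t_1=0$: coercivity of $\scrE$ — positive definiteness of $\mathbb C(\cdot)$ uniformly in $\zeta\in[0,1]$ by continuity and compactness, positive definiteness of $\mathbb H$, and the $\kappa_1|\nabla\pi|^2$, $\kappa_2|\nabla\zeta|^r$ terms — together with a Gronwall argument to absorb the $\scrE_t'$ integral (estimating $\scrE_t'$ by the $H^1$-norm of $\baru_\tau$ times $\|\DTuD\|_{H^1}+\|\DT g\|_{L^p}+\|\DT f\|_{L^p}$, using Korn's and trace/Sobolev inequalities) bounds $\scrE(t,\baru_\tau(t),\barpi_\tau(t),\barzeta_\tau(t))$ and the two $\Diss$-terms uniformly. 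The $L^\infty(I;H^1)$ bounds on $\baru_\tau$ and $\barpi_\tau$, the $0\le\barzeta_\tau\le1$ bound (hence $L^\infty(\Omega)$), and the $\mathrm{BV}(I;L^1)$ bounds on $\barpi_\tau,\barzeta_\tau$ (from boundedness of the $\Diss$-terms, since $\delta_S^*$ dominates a norm because $\mathrm{int}\,S\ni0$, and $a,b>0$) then read off. The main obstacle is the Gronwall step: one must bootstrap, using that $\|\baru_\tau(t)\|_{H^1}$ appears inside the $\scrE_t'$ integral, so the cleanest route is to first bound $\scrE(t,\dots)$ in terms of $\int_0^t(1+\scrE(s,\dots))\,\d s$ via the coercivity lower bound, then apply the integral Gronwall lemma — the discrete-in-time version being immediate since the right-hand side is already an integral.
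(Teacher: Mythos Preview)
Your proposal is correct and follows essentially the same route as the paper's own (sketch of) proof: optimality in \eqref{minimize-1} for \eqref{dam-ls-VI} and \eqref{dam-ls-semi-stab1}, optimality in \eqref{minimize-2} for \eqref{dam-ls-semi-stab2}, the two-step comparison with the previous iterate (exploiting the cancellation of the intermediate term $\scrE_\tau^k(u_\tau^k,\pi_\tau^k,\zeta_\tau^{k-1})$) summed over $k$ for \eqref{dam-ls-semi-engr}, and then coercivity plus a Gronwall-type argument for the a-priori bounds. The only cosmetic difference is that your fundamental-theorem-of-calculus step produces the $\scrE_t'$-integrand evaluated at the \emph{right}-continuous interpolants $(\underline u_\tau,\underline\pi_\tau,\underline\zeta_\tau)$ rather than the $\baru_\tau$-version literally written in \eqref{dam-ls-semi-engr}; the paper alludes to a ``discrete by-part integration (=\,summation)'' to handle this shift, but for the stability estimates either form works equally well.
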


\noindent{\it Sketch of the proof.}
Writing optimality condition for \eqref{minimize-1} in terms of $u$,
one arrives at \eqref{dam-ls-VI}, and comparing the value of
\eqref{minimize-1} at $(u_\tau^k,\pi_\tau^k)$ with its value at
$(u_\tau^k,\widetilde\pi)$ and using the degree-1 homogeneity of $\scrR_1$,
one arrives at \eqref{dam-ls-semi-stab1}.

Comparing the value of \eqref{minimize-2} at $\zeta_\tau^k$ with its
value at $\widetilde\zeta$ and using the degree-1 homogeneity of
$\scrR_2$, one arrives at \eqref{dam-ls-semi-stab2}.

In obtaining \eqref{dam-ls-semi-engr},
we compare the value of \eqref{minimize-1} at the minimizer
$(u_\tau^k,\pi_\tau^k)$ with the value at $(u_\tau^{k-1},\pi_\tau^{k-1})$
and  the value of \eqref{minimize-2}  at the minimizer $\zeta_\tau^k$ with
the value at $\zeta_\tau^{k-1}$ and we benefit from the cancellation of the 
terms $\pm\scrE(k\tau,u_\tau^k,\pi_\tau^k,\zeta_\tau^{k-1})$.
We also use the discrete by-part integration (=\,summation) for the 
$\scrE_t'$-term.

Then, using \eqref{dam-ls-semi-engr} for $t_1=0$ and the coercivity of 
$\scrE(t,\cdot,\cdot,\cdot)$ due to the assumptions \eqref{ass}, we obtain 
also the a-priori estimates \eqref{dam-ls-estimates}.
$\hfill\Box$

\medskip

The cancellation effect mentioned in the above proof is typical in 
fractional-step methods, cf.\ e.g.\ \cite[Remark~8.25]{Roub13NPDE}. 
Further, note that
\eqref{dam-ls} is of a similar form as \eqref{def-LS} and is
thus prepared to make a limit passage for $\tau\to0$:

\begin{proposition}[Convergence towards local solutions]
\label{ch4:prop-dam-plast}
\index{local solution!to plasticity with damage}
Let  \eqref{ass} hold
and let 
$(\baru_\tau,\barpi_\tau,\barzeta_\tau)$ be an approximate solution 
obtained by the semi-implicit formula 
\eqref{dam-small-II-LS-min}. Then there exists a subsequence (indexed
again by $\tau$ for notational simplicity) and 
$u\in{\rm B}([0,T];H^1(\Omega;\R^d))$ and 
$\pi\in{\rm B}([0,T];H^1(\Omega;\DEV))\cap{\rm BV}([0,T];L^1(\Omega;\DEV))$
and $\zeta\in{\rm B}([0,T];W^{1,r}(\Omega))\cap{\rm BV}([0,T];L^1(\Omega))$
such that
\begin{subequations}\label{dam-plast-conv}
\begin{align}\label{dam-plast-conv-u}
&\baru_\tau(t)\to u(t)&&\text{in } H^1(\Omega;\R^d)
&&\hspace*{-2.5em}\text{for all }t\in[0,T],&&
\\\label{dam-plast-pi}&
\barpi_\tau(t)\to\pi(t)&&\text{in } H^1(\Omega;\DEV)
&&\hspace*{-2.5em}\text{for all }t\in[0,T],&&
\\\label{dam-plast-zeta}&
\barzeta_\tau(t)\to\zeta(t)&&\text{in }
 W^{1,r}(\Omega)
&&\hspace*{-2.5em}\text{for all }t\in[0,T].&&
\end{align}
\end{subequations}
Moreover, any $(u,\pi,\zeta)$ obtained by this way is a local solution 
to the damage/plasticity problem in that sense of Definition~\ref{def-loc-sln}.
\end{proposition}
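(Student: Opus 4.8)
The plan is to pass to the limit $\tau\to0$ in the discrete system \eqref{dam-ls} using the a-priori estimates \eqref{dam-ls-estimates} together with a Helly-type selection principle, and then to verify the four conditions \eqref{def-LS} one by one. First I would invoke the bounds \eqref{dam-ls-estimate1}--\eqref{dam-ls-estimate2}: since $\barpi_\tau$ is bounded in $L^\infty(I;H^1(\Omega;\DEV))\cap{\rm BV}(I;L^1(\Omega;\DEV))$ and $\barzeta_\tau$ in $L^\infty\cap{\rm BV}(I;L^1(\Omega))$, a generalized Helly selection theorem (in the form used for rate-independent systems, e.g.\ via \cite{MieRou15RIST}) yields a subsequence and limits $\pi\in{\rm B}(I;H^1(\Omega;\DEV))\cap{\rm BV}(I;L^1(\Omega;\DEV))$ and $\zeta\in{\rm B}(I;W^{1,r}(\Omega))\cap{\rm BV}(I;L^1(\Omega))$ with $\barpi_\tau(t)\weak\pi(t)$ in $H^1$ and $\barzeta_\tau(t)\weak\zeta(t)$ in $W^{1,r}$ for every $t$. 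For fixed $t$ the underline-interpolant $\underline\zeta_\tau(t)$ has the same weak limit $\zeta(t)$ because $\|\barzeta_\tau(t)-\underline\zeta_\tau(t)\|_{L^1}\to0$ by the BV bound, and similarly for $\underline\pi_\tau$; also $t_\tau\to t$. Then, testing the elliptic equation \eqref{dam-ls-VI} (which is linear in $u$ for frozen $(\pi,\zeta)$) and using continuity of $\mathbb C(\cdot)$ together with the strong convergence forthcoming below, one gets $u(t)$ as the unique solution of \eqref{def-ls-VI}, and in particular $\baru_\tau(t)\weak u(t)$.

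The crucial upgrade is to improve these weak convergences to the \emph{strong} (norm) convergences claimed in \eqref{dam-plast-conv}. For $u$ this follows from testing the difference of the Euler--Lagrange equations and using uniform convexity of the quadratic form $e\mapsto\int_\Omega\mathbb C(\zeta(t))e{:}e$ (on the closed subspace $\{v|_{\GD}=0\}$, via Korn's inequality), once $\mathbb C(\barzeta_\tau(t))\to\mathbb C(\zeta(t))$ strongly, which holds because $W^{1,r}(\Omega)\hookrightarrow C(\bar\Omega)$ compactly for $r>d$ and $\mathbb C$ is continuous. The compact embedding $W^{1,r}\hookrightarrow\hookrightarrow C(\bar\Omega)$ upgrades $\barzeta_\tau(t)\weak\zeta(t)$ in $W^{1,r}$ to strong convergence in $C(\bar\Omega)$ for each $t$; to get strong convergence in the $W^{1,r}$-norm itself, one uses the discrete semi-stability \eqref{dam-ls-semi-stab2} tested with $\widetilde\zeta=\zeta(t)$, which bounds $\frac{\kappa_2}{r}\|\nabla\barzeta_\tau(t)\|_r^r$ from above by $\frac{\kappa_2}{r}\|\nabla\zeta(t)\|_r^r$ plus lower-order terms that converge; combined with weak lower semicontinuity this gives $\|\nabla\barzeta_\tau(t)\|_r\to\|\nabla\zeta(t)\|_r$ and hence, by uniform convexity of $L^r$, strong convergence. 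An entirely analogous argument with \eqref{dam-ls-semi-stab1} tested at $\widetilde\pi=\pi(t)$ (using the quadratic term $\frac{\kappa_1}{2}\|\nabla\barpi_\tau(t)\|_2^2$ and the uniform convexity of $\scrE$ in $\pi$) yields $\barpi_\tau(t)\to\pi(t)$ in $H^1(\Omega;\DEV)$.

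With strong convergence in hand, passing to the limit in \eqref{dam-ls-semi-stab1} and \eqref{dam-ls-semi-stab2} is routine (continuity of $\scrE$ and of $\scrR_1,\scrR_2$ along strongly convergent sequences), giving \eqref{def-ls-semi-stab1}--\eqref{def-ls-semi-stab2} with the jump set $J$ taken as the at-most-countable set of discontinuity points of the BV functions $\pi,\zeta$; and \eqref{def-ls-VI} as noted above. It remains to derive the energy (im)balance \eqref{def-ls-engr}. The ``$\le$'' direction is obtained by passing to the lim sup in the discrete inequality \eqref{dam-ls-semi-engr}: the dissipation terms are handled by lower semicontinuity of $\Diss_{\scrR_i}$ along pointwise-convergent sequences, the stored energies at $t_1,t_2$ converge (by strong convergence at those times, after first restricting to dyadic-type times and then using left/right limits), and the power integral $\int_{t_1}^{t_2}\scrE_t'$ converges because $\scrE_t'$ is affine in $(u,\pi)$ with coefficients in $W^{1,1}(I;\,\cdot\,)$ and $\baru_\tau\to u$, $\barpi_\tau\to\pi$ pointwise with a uniform $L^\infty$-in-time bound (dominated convergence). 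This is the step I expect to be the main obstacle: one must also establish the reverse inequality, i.e.\ the lower energy estimate, which does not come from the discretisation directly but from integrating in time the sum of the two flow-rule inclusions (the ``by-part'' identity of Biot-type systems). The standard device here is: semi-stability plus the chain rule for $t\mapsto\scrE(t,u(t),\pi(t),\zeta(t))$ along BV solutions yields the opposite inequality, but the chain rule itself is delicate because $\zeta$ is only BV in the weak $L^1$ topology while $\mathbb C'(\zeta)e_{\rm el}{:}e_{\rm el}$ must be integrated against $\d\zeta$; one circumvents this exactly as in \cite[Prop.\,2.3]{Roub15MDLS}, using that $e_{\rm el}$ depends continuously on $\zeta$ through the elliptic equation, so the composed map is sufficiently regular for the chain-rule inequality to hold. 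Once both inequalities hold, \eqref{def-ls-engr} is an equality on $[t_1,t_2]$, the reverse inequality also forces the lim sup in \eqref{dam-ls-semi-engr} to be a limit, which retroactively confirms convergence of all energy terms and closes the argument.
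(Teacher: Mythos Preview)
Two points deserve correction, one minor and one substantial.

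\medskip
\textbf{The limits of $\barzeta_\tau(t)$ and $\underline\zeta_\tau(t)$ need not coincide.} Your claim that $\|\barzeta_\tau(t)-\underline\zeta_\tau(t)\|_{L^1}\to0$ ``by the BV bound'' is false: the uniform BV bound controls the \emph{total} variation $\sum_k\|\zeta_\tau^k-\zeta_\tau^{k-1}\|_{L^1}$, not individual increments, and at a jump time of the limit this difference will not vanish. The paper handles this correctly by selecting (via Helly) a \emph{separate} limit $\underline\zeta_\tau(t)\weak\zeta_*(t)$ and only afterwards identifying $\zeta_*(t)=\zeta(t)$ for $t$ outside the at-most-countable set $J$ of discontinuity points of the two BV limits. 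This is precisely why the exceptional set $J$ appears in \eqref{def-ls-VI}--\eqref{def-ls-semi-stab1}: the limit Euler--Lagrange equation reads $\scrE_u'(t,u(t),\pi(t),\zeta_*(t))=0$, which matches \eqref{def-ls-VI} only on $I\setminus J$.

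\medskip
\textbf{No reverse energy inequality is required.} Your ``main obstacle'' is not an obstacle at all: Definition~\ref{def-loc-sln} asks only for the energy \emph{in}equality \eqref{def-ls-engr}, not an equality. Local solutions in this sense generically do \emph{not} conserve energy (energy may be lost at jumps), so the chain-rule argument you sketch would actually fail in general, and in any case is unnecessary. Once you have the strong convergences \eqref{dam-plast-conv}, the passage from the discrete inequality \eqref{dam-ls-semi-engr} to \eqref{def-ls-engr} is a direct liminf/limsup argument using lower semicontinuity of $\Diss_{\scrR_i}$ and convergence of $\scrE$ and $\scrE_t'$; the paper simply calls this ``simple''.

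\medskip
\textbf{On your route to strong convergence.} Setting aside the two issues above, your method of upgrading weak to strong convergence --- testing the semi-stabilities \eqref{dam-ls-semi-stab1}--\eqref{dam-ls-semi-stab2} with $\widetilde\pi=\pi(t)$ and $\widetilde\zeta=\zeta(t)$ to force norm convergence of $\nabla\barpi_\tau(t)$ and $\nabla\barzeta_\tau(t)$, then invoking uniform convexity of $L^2$ and $L^r$ --- is legitimate and genuinely different from the paper's argument. The paper instead works at the level of the discrete \emph{flow rules}: it uses the weak plastic flow rule to produce a multiplier $\barxi_\tau\in\partial\delta_S^*$ (bounded in $L^\infty$ and hence compact in $H^1(\Omega;\DEV)^*$, which is where the gradient-plasticity term $\kappa_1>0$ is essential), and the weak damage flow rule with a multiplier $\barxi_{\mathrm{dam},\tau}\in[-b,a]$ (bounded in $L^\infty$ thanks to $b<\infty$, i.e.\ the healing regularization), and then tests these against $\barpi_\tau-\pi$ and $\barzeta_\tau-\zeta$ using monotonicity. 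Your semi-stability route is shorter and avoids explicitly invoking the multipliers; the paper's route, on the other hand, makes transparent \emph{why} the gradient term $\kappa_1|\nabla\pi|^2$ and the finite healing threshold $b<\infty$ are analytically needed.
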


\begin{proof}
By a (generalized) Helly's selection principle, cf.\ also
e.g.\ \cite{Miel11DEMF,MieRou15RIST}, we choose a subsequence and
$\pi\in{\rm B}([0,T];H^1(\Omega;\DEV))\cap{\rm BV}([0,T];L^1(\Omega;\DEV))$
and $\zeta,\,\zeta_*\in{\rm B}([0,T];W^{1,r}(\Omega))$ $\cap$ 
${\rm BV}([0,T];L^1(\Omega))$
so that
\begin{subequations}\label{eq3:LS-conv-to-z}\begin{align}
&\barpi_\tau(t)\weak\pi(t)&&&&
&&\text{ in }\ H^1(\Omega;\DEV)\ \text{ for all }\ t\!\in\![0,T],
\\
&\barzeta_\tau(t)\weak\zeta(t)&&\&&&\underline\zeta_\tau(t)\weak\zeta_*(t)
&&\text{ in }\ W^{1,r}(\Omega)\ \ \ \ \ \ \text{ for all }\ \ t\!\in\![0,T].
\end{align}\end{subequations}
Now, for a fixed $t\in [0,T]$, by Banach's selection principle, we select
(for a moment) further subsequence so that
\begin{align}\label{weak-conv-u}
\baru_\tau(t)\weak u(t)\quad\text{ in }\ H^1(\Omega;\R^d).
\end{align}
We further use that $\baru_\tau(t)$ minimizes
$\scrE(t_\tau,\cdot,\barpi_\tau(t),\underline\zeta_\tau(t))$
with $t_\tau:=\min\{k\tau\ge t;\ k\in\N\}$.
Obviously, $t_\tau\to t$ for $\tau\to0$
and, by the weak-lower-semicontinuity argument, we can easily
see that $u(t)$ minimizes the strictly convex functional
$\scrE(t,\cdot,\zeta_*(t),\pi(t))$; this is indeed simple to 
prove due to the compactness in both $\pi$ and $\zeta$ due to the gradient 
theories involved.
Thus $u(t)$ is determined uniquely so that, in fact, we did not need to
make further selection of a subsequence, and this procedure
can be performed for any $t$  by using the same subsequence already
selected for \eqref{eq3:LS-conv-to-z}. Also, $u:[0,T]\to H^1(\Omega;\R^d)$ is 
measurable because $\pi$ and $\zeta_*$ are measurable, and 
$\scrE_u'(t,u(t),\pi(t),\zeta_*(t))\!=\!0$ for all $t$.

The key ingredient is improvement of the weak convergence
\eq{eq3:LS-conv-to-z} and \eq{weak-conv-u} for the strong 
convergence. For the strong convergence in $u$ and $\pi$, we use 
the uniform convexity 
of the quadratic form induced by $\mathbb{C}(\zeta)$, $\mathbb{H}$, and 
$\kappa_1$ with the information we have at disposal from 
\eqref{dam-ls-semi-stab1}
leading, when using the abbreviation $e^{}_\mathrm{el}=e(u{-}\uD)-\pi$ and 
$\bare^{}_{\mathrm{el},\tau}=e(\baru_\tau{-}\uDtau)-\barpi_\tau$, to the
estimate:
\begin{align}\nonumber
\nonumber&\hspace*{-.1em}\int_{\Omega}\!
\mathbb{C}(\ul\zeta_\tau(t))\big(\bare^{}_{\mathrm{el},\tau}(t){-}e^{}_\mathrm{el}(t)\big)
:\big(\bare^{}_{\mathrm{el},\tau}(t){-}e^{}_\mathrm{el}(t)\big)
\\[-.2em]\nonumber&\quad
+\mathbb{H}\big(\barpi_\tau(t){-}\pi(t)\big):\big(\barpi_\tau(t){-}\pi(t)\big)
+\frac{\kappa_1}2\big|\nabla\barpi_\tau(t){-}\nabla\pi(t)\big|^2\,\d x
\\[-.2em]\nonumber&\
\le\int_{\Omega}\!\!-\mathbb{C}(\ul\zeta_\tau(t))e^{}_\mathrm{el}(t):
\big(\bare^{}_{\mathrm{el},\tau}(t){-}e^{}_\mathrm{el}(t)\big)
-\big(\mathbb{H}\pi(t){-}\barxi_\tau(t)\big):\big(\barpi_\tau(t){-}\pi(t)\big)
\\[-.2em]
\nonumber
&\quad
+\frac{\kappa_1}2\nabla\pi(t):\nabla\big(\barpi_\tau(t){-}\pi(t)\big)
-\barf_\tau(t){\cdot}(\baru_\tau(t){-}u(t))
\,\d x-\int_{\GN}\!\barg_\tau(t){\cdot}(\baru_\tau(t){-}u(t))\,\d S\to0
\end{align}
where we use some $\barxi_\tau(t)\in\pl\delta_S^*(\barpi_\tau(t))$ which 
solves at time $t$ in the weak sense the discrete plastic flow-rule 
$\barxi_\tau+\mathbb{H}\barpi_\tau-{\rm dev}\,\barsigma_\tau=\kappa_1\Delta\barpi_\tau$
with $\barsigma_\tau=\mathbb{C}(\ul\zeta_\tau)\bare^{}_{\mathrm{el},\tau}$. 
Thus we proved 
\begin{align}\nonumber
\bare^{}_{\mathrm{el},\tau}(t)\to e^{}_\mathrm{el}(t)
\ \ \text{ strongly in }\ L^2(\Omega;\SYM)
\end{align} together with \eq{dam-plast-pi}.
Realizing that 
$e(\baru_\tau(t))=e(\uDtau(t))+\barpi_\tau(t)+\bare^{}_{\mathrm{el},\tau}(t)$,
we obtain also $e(\baru_\tau(t))\to e(u(t))$ strongly in $L^2(\Omega;\SYM)$,
and thus also \eq{dam-plast-conv-u}.
Note that we exploited the 
gradient theory for plasticity which ensures that the sequence 
$(\barxi_\tau)^{}_{\tau>0}$, which is 
bounded in $L^\infty(\Omega;\DEV)$ because the plastic domain $S\subset\DEV$ 
is bounded, is relatively compact in $H^1(\Omega;\DEV)^*$ so that 
the term $\int_{\Omega}\barxi_\tau(t):(\barpi_\tau(t){-}\pi(t))\,\d x$
indeed converges to zero because $\barpi_\tau(t)\weak\pi(t)$ in 
$H^1(\Omega;\DEV)$.

The convergence \eq{dam-plast-zeta} can be proved by
using the uniform-like monotonicity of the set-valued mapping
$\zeta\mapsto\pl\delta_{[0,1]}^{}(\zeta)
-\kappa_2\,\mathrm{div}(|\nabla\zeta|^{r-2}\nabla\zeta):
W^{1,r}(\Omega)\rightrightarrows W^{1,r}(\Omega)^*$. Analogously
to \eqref{plast-dam13}, we can write the discrete damage flow rule 
after the shift \eqref{subst-zero-Dirichlet} as
\begin{subequations}\label{dis-dam-flow-rule}\begin{align}
\barxi_{\mathrm{dam},\tau}
+\mathbb{C}'(\ul\zeta_\tau)\bare^{}_{\mathrm{el},\tau}:\bare^{}_{\mathrm{el},\tau}
=\kappa_2\,\mathrm{div}(|\nabla\barzeta_\tau|^{r-2}\nabla\barzeta_\tau)
-\bareta_\tau\qquad\qquad\qquad
\\\qquad\qquad
\text{ with some}\ \barxi_{\mathrm{dam},\tau}\!\in\!\pl\delta_{[-a,b]}^*(\DT\zeta_\tau)
\ \text{ and }\ \bareta_\tau\!\in\!\pl\delta_{[0,1]}^{}(\barzeta_\tau)
\end{align}\end{subequations}
with the boundary condition $\nabla\barzeta_\tau\cdot\vec{n}=0$ on $\Sigma$; in 
\eqref{dis-dam-flow-rule}, $\barxi_{\mathrm{dam},\tau}$ and 
$\bareta_\tau$ are considered piece-wise constant in time, consistently 
with our bar-notation. An important fact is that $\barxi_{\mathrm{dam},\tau}(t)$
is valued in $[-b,a]$ and hence a-priori bounded in $L^\infty(\Omega)$; here
we vitally exploited the concept of possible (small) healing allowed.
We can rely on $\barxi_{\mathrm{dam},\tau}(t)\weaks\xi_{\mathrm{dam}}(t)$ in 
$L^\infty(\Omega)$
for some $t$-dependent subsequence and some $\xi_{\mathrm{dam}}(t)$. Using that 
$\mathbb{C}'(\ul\zeta_\tau(t))\bare^{}_{\mathrm{el},\tau}(t):
\bare^{}_{\mathrm{el},\tau}(t)$ is bounded and, due to 
(\ref{dam-plast-conv}a,b), even has been proved converging 
in $L^1(\Omega)$ which is a subspace of $W^{1,r}(\Omega)^*$ because
$r>d$ is considered. By the standard theory for monotone variational 
inequalities, we can pass to the limit in \eqref{dis-dam-flow-rule} at 
time $t$ to obtain, in the weak formulation,
\begin{align}\label{dis-dam-flow-rule-weak}
\xi_{\mathrm{dam}}(t)
+\mathbb{C}'(\zeta_*(t))e^{}_{\mathrm{el}}(t){:}e^{}_{\mathrm{el}}(t)
=\kappa_2\,\mathrm{div}(|\nabla\zeta(t)|^{r-2}\nabla\zeta(t))
-\eta(t)\ \ \text{ with }\ \eta(t)\!\in\!\pl\delta_{[0,1]}^{}(\zeta(t)).
\end{align}
Then, at any $t$, we can estimate
\begin{align}\nonumber
&\kappa_2\limsup_{k\to\infty}\big(\|\nabla \barzeta_\tau(t)\|_{L^r(\Omega;\R^d)}^{r-1}\!
-\|\nabla \zeta(t)\|_{L^r(\Omega;\R^d)}^{r-1}\big)
\big(\|\nabla \barzeta_\tau(t)\|^{}_{L^r(\Omega;\R^d)}\!
-\|\nabla \zeta(t)\|^{}_{L^r(\Omega;\R^d)}\big)
\\&\nonumber\quad\le
\limsup_{k\to\infty}\int_\Omega\kappa_2\big(|\nabla \barzeta_\tau(t)|^{r-2}\nabla \barzeta_\tau(t)
-|\nabla \zeta(t)|^{r-2}\nabla \zeta(t)\big){\cdot}\nabla(\barzeta_\tau(t){-}\zeta(t))
\\[-.3em]&\nonumber\hspace{20em}
+(\bareta_\tau(t){-}\eta(t))(\barzeta_\tau(t){-}\zeta(t))\dd x
\\&\nonumber\quad
=\lim_{k\to\infty}\int_\Omega
\mathbb{C}'(\ul\zeta_\tau(t))\bare^{}_{\mathrm{el},\tau}(t)
:\bare^{}_{\mathrm{el},\tau}(t)\big(\barzeta_\tau(t){-}\zeta(t)\big)
-\kappa_2\,|\nabla \zeta(t)|^{r-2}\nabla \zeta(t){\cdot}\nabla(\barzeta_\tau(t){-}\zeta(t))
\\[-.3em]\label{conv-of-zeta}&\hspace{18em}
-(\xi_{\mathrm{dam}}(t){+}\eta(t))(\barzeta_\tau(t){-}\zeta(t))\dd x=0
\end{align}
where the last equality has exploited \eqref{dis-dam-flow-rule-weak}.
The important fact used for \eq{conv-of-zeta} is that 
\begin{align}\label{eq4:dam-plast-conv-zeta}
\mathbb{C}'(\ul\zeta_\tau(t))\bare^{}_{\mathrm{el},\tau}(t)
:\bare^{}_{\mathrm{el},\tau}(t)
\big(\barzeta_\tau(t){-}\zeta(t)\big)\to0\ \ \text{ weakly in }\ L^1(\Omega);
\end{align}
in fact, this convergence is even strong when realizing that
$\barzeta_\tau(t)\to\zeta(t)$ in $L^\infty(\Omega)$, for 
which again $r>d$ is exploited. From this, \eqref{dam-plast-zeta} follows.
Thus, from \eqref{conv-of-zeta} we can see that 
$\|\nabla\barzeta_\tau(t)\|_{L^r(\Omega;\R^d)}\to\|\nabla\zeta(t)\|_{L^r(\Omega;\R^d)}$
and, from uniform convexity of the Lebesgue space $L^r(\Omega;\R^d)$,
we eventually obtain \eqref{dam-plast-zeta}.
Actually, the specific value $\xi_{\mathrm{dam}}(t)$
of the limit of (a $t$-dependent subsequence of) 
$\{\xi_{\mathrm{dam},\tau}(t)\}_{\tau>0}$
which is surely precompact in $W^{1,r}(\Omega)^*$ is not important 
and thus \eqref{dam-plast-zeta} holds for the originally selected 
subsequence, too.

Having the strong convergences \eq{dam-plast-conv} proved, the limit
passage from \eqref{dam-ls} towards \eq{def-LS} is simple. In particular, 
by continuity of both BV-functions $\zeta(\cdot)$ and $\zeta_*(\cdot)$
on $[0,T]{\setminus}J$ for some at most countable set $J$, we have also 
$\zeta_*(t)=\zeta(t)$ at any $t$ except at most countable the set $J$.
\end{proof}

\begin{remark}[{\sl Approximate maximum-dissipation principle}]\label{rem-DMP}
\upshape
One can devise the discrete analog of the integrated maximum-dissipation
principle \eqref{IMDP} straightforwardly for the left-continuous
interpolants \eqref{def-of-interpolants},
required however to hold only asymptotically.
More specifically, in analog to \eqref{IMDP} formulated
equivalently for all $[0,t]$ instead of $[t_1,t_2]$, one can expect
an {\it Approximate Maximum-Dissipation Principle} (AMDP) in the form
\begin{subequations}\label{AMDP}
\begin{align}\label{AMDP-a}
&\!\int_0^t\!\barxi_{\mathrm{plast},\tau}\,\d\barpi_\tau
\ \stackrel{\mbox{\bf ?}}{\sim}\ \Diss_{\scrR_1}^{}(\barpi_\tau;[0,t])
\ \ \ \text{ with }\ \ \barxi_{\mathrm{plast},\tau}
=-\big[\bar{\scrE}_\tau\big]_\pi'(\cdot,\baru_\tau,\barpi_\tau,\underline\zeta_\tau),
\\\label{AMDP-b}
&\!\int_0^t\!\barxi_{\mathrm{dam},\tau}\,\d\barzeta_\tau
\ \stackrel{\mbox{\bf ?}}{\sim}\ \Diss_{\scrR_2}^{}(\barzeta_\tau;[0,t])
\ \ \ \text{ for some }\ \ \barxi_{\mathrm{dam},\tau}
\!\in\!
-\partial_\zeta\bar{\scrE}_\tau(\cdot,\baru_\tau,\barpi_\tau,\barzeta_\tau),
\end{align}\end{subequations}
or, analogously to \eqref{IMDP+}, 
\begin{align}\label{AMDP+}
&\!\int_0^t\!\barxi_{\tau}\,\d\barz_\tau
\,\stackrel{\mbox{\bf ?}}{\sim}\,\Diss_{\scrR}^{}(\barz_\tau;[0,t])
\  \text{ with some }\ \barxi_{\tau}
\in\big\{{-}\big[\bar{\scrE}_\tau\big]_\pi'(\cdot,\baru_\tau,\barpi_\tau,\underline\zeta_\tau)\big\}\times
-\partial_\zeta^{}\bar{\scrE}_\tau(\cdot,\baru_\tau,\barpi_\tau,\barzeta_\tau),
\end{align}
where the integrals are again the lower Moore-Pollard-Stieltjes 
integrals as in \eqref{IMDP} and where $\bar{\scrE}_\tau(\cdot,u,\pi,\zeta)$ 
is the left-continuous piecewise-constant interpolant of the values 
$\scrE(k\tau,u,\pi,\zeta)$, $k=0,1,...,T/\tau$.
Moreover, ''${\stackrel{\mbox{\footnotesize\bf ?}}{\sim}}$'' in \eqref{AMDP}
means that the equality holds possibly only asymptotically for $\tau\to0$ but
even this is rather only desirable and not always valid.
Anyhow, loadings which, under given geometry of the specimen,
lead to  rate-independent  slides  where the solution
is absolutely continuous will always comply with AMDP \eqref{AMDP}.
Also, some finite-dimensional examples  of ``damageable springs''
in \cite{MieRou15RIST,Roub15MDLS} show that this AMDP can detect too
early rupturing local solutions (in particular the energetic ones)
while it generically holds for solutions obtained by the
algorithm \eqref{dam-small-II-LS-min}. 
Generally speaking, \eqref{AMDP} should rather be a-posteriori checked to 
justify the (otherwise not physically based) simple and numerically efficient
fractional-step-type semi-implicit algorithm \eqref{dam-small-II-LS-min}
from the perspective of the stress-driven solutions in particular
situations and possibly to provide a valuable information
that can be exploited to adapt time or space discretisation towards
better accuracy in \eqref{AMDP} and thus close towards the stress-driven
scenario. Actually, for the piecewise-constant interpolants, we can simply 
evaluate the integrals explicitly, so that AMDP \eqref{AMDP+} reads
\begin{align}\nonumber
&\!\!
\sum_{k=1}^K\!\int_{\Omega}\delta_S^*(\pi(t_{j-1}){-}\pi(t_j))+
a\big(\zeta_\tau^k{-}\zeta_\tau^{k-1}\big)^-\!
+b\big(\zeta_\tau^k{-}\zeta_\tau^{k-1}\big)^+\,\d x
\\[-1.6em]&\hspace{13em}
-
\!\big\langle\xi_{\mathrm{plast},\tau}^{k-1},\pi_\tau^k{-}\pi_\tau^{k-1}\big\rangle
-\big\langle\xi_{\mathrm{dam},\tau}^{k-1},\zeta_\tau^k{-}\zeta_\tau^{k-1}\big\rangle
\ \stackrel{\mbox{\large\bf ?}}{\le}\eps_\tau\searrow\ 0
\label{AMDP++}
\\[.2em]&\nonumber\qquad\quad\text{where }
\ \ 
\xi_{\mathrm{plast},\tau}^k=
-\big[\scrE_\tau^k\big]_\pi'(u_\tau^k,\pi_\tau^k,\zeta_\tau^{k-1})
\ \text{ and }\ 
\xi_{\mathrm{dam},\tau}^k\in
-\partial_\zeta^{}\scrE_\tau^k(u_\tau^k,\pi_\tau^k,\zeta_\tau^k)
\end{align}
for some $\eps_\tau\searrow\ 0$,
where $K=\max\{k\!\in\!\mathbb N;\ k\tau\le t\}$. Notably, in contrast to 
\eqref{IMDP} and \eqref{IMDP+}, the AMDP \eqref{AMDP} and \eqref{AMDP+} are 
equivalent to each other as the limsup's (cf.\ the definition 
\eqref{def-of-RS-int}) in all involved integrals is attained on the 
equidistant partitions with the time step $\tau$ and the ``inf'' in the 
Darboux sums is redundant. Evaluating the 
dualities,
\eqref{AMDP++} can be written more explicitly as 
$\int_\Omega R_\tau^K\,\d x{\stackrel{\mbox{\footnotesize\bf ?}}{\ \le\ }}\eps_\tau\searrow\ 0$
with the residuum 
\begin{align}\nonumber
R_\tau^K&:=
\sum_{k=1}^K\bigg(\delta_S^*(\pi(t_{j-1}){-}\pi(t_j))+a\big(\zeta_\tau^k{-}\zeta_\tau^{k-1}\big)^-\!
+b\big(\zeta_\tau^k{-}\zeta_\tau^{k-1}\big)^+
\\[-.1em]&\nonumber\ \ \
-\Big(\mathbb{C}(\zeta_\tau^{k-2})\big(\pi_\tau^{k-1}\!-e(u_\tau^{k-1}{+}\uDtaukk)\big)
+\mathbb{H}\pi_\tau^{k-1}\Big):\big(\pi_\tau^k{-}\pi_\tau^{k-1}\big)
\\[-.1em]&\nonumber\ \ \
-\Big(\frac12\mathbb{C}'(\zeta_\tau^{k-1})
\big(e(u_\tau^{k-1}{+}\uDtaukk){-}\pi_\tau^{k-1}\big)
{:}\big(e(u_\tau^{k-1}{+}\uDtaukk){-}\pi_\tau^{k-1}\big)
+\xi_{\mathrm{const},\tau}^{k-1}\Big)\big(\zeta_\tau^k{-}\zeta_\tau^{k-1}\big)
\\[-.2em]&\qquad\qquad\qquad
-\kappa_1\nabla\pi_\tau^{k-1}\Vdots\nabla\big(\pi_\tau^k{-}\pi_\tau^{k-1}\big)
-\kappa_2\big|\nabla\zeta_\tau^{k-1}\big|^{r-2}
\nabla\zeta_\tau^{k-1}{\cdot}\nabla\big(\zeta_\tau^k{-}\zeta_\tau^{k-1}\big)\bigg)
\label{rhs-to-evaluate}\end{align}
with some multiplier $\xi_{\mathrm{const},\tau}^k\!\in\!N_{[0,1]}^{}(\zeta_\tau^k)$
and with $\zeta_\tau^{k-2}$ for $k=1$ equal to $\zeta_0$. Note that $R_\tau^K$ 
cannot be guaranteed non-negative pointwise on $\Omega$, only
their integrals over $\Omega$ are non-negative. 
One can a-posteriori check the residua depending on $t$ 
or possibly also on space, cf.\ also \cite{RoPaMa??LSAR,VoMaRo14EMDL} for a 
surface variant of such a model or \ Figures~\ref{fig-movie-asym}--\ref{fig:residua} below.
\end{remark}

\section{\large IMPLEMENTATION OF THE DISCRETE MODEL}
\label{sect-BEM}

To implement the model computationally, we need to make a {\it spatial discretisation} of the 
variables from the semi-implicit time discretization of Section 4. Essentially,
we apply conformal Galerkin (or also called Ritz) method to the 
minimization problems \eqref{minimize-1} and \eqref{minimize-2} which are then 
restricted to the corresponding finite-dimensional subspaces. These 
subspaces are constructed by the {\it finite-element method} (FEM), and the 
solution thus obtained is denoted by 
$$q_{\tau h}^k:=\big(u_{\tau h}^k, \pi_{\tau h}^k, \zeta_{\tau h}^k\big),$$
with $h>0$ denoting the mesh size of the triangulation, let us denote it by $\mathscr T_h$,
of the domain $\Omega$ considered polyhedral here. By this way, we obtain also the
piecewise constant and the piecewise affine interpolants in time, denoted respectively
by $\baru_{\tau h}$ and $u_{\tau h}$, $\barpi_{\tau h}$ and $\pi_{\tau h}$, 
and eventually $\barzeta_{\tau h}$ and $\zeta_{\tau h}$.
The simplest option is to consider the lowest-order conformal  FEM, i.e.\
P1-elements for  $u$,  $\zeta$, and  $\pi$.
In Sect.~\ref{sect-simul}, 
only the case $d=2$ will be treated, so the previous analytical part have required $r>2$ 
and we make 
an (indeed small) shortcut by considering $r=2$. Moreover, we will not consider the
loading on $\GN$ so we put $f=0$.

The material is assumed isotropic with properties linearly dependent on damage.
The isotropic elasticity tensor is assumed as 
\begin{equation}\label{C-ass-implem}
\mathbb C_{ijkl} (\zeta):=\big[(\lambda_1{-}\lambda_0)\zeta+\lambda_0\big]\delta_{ij}\delta_{kl}  
+\big[(\mu_1{-}\mu_0)\zeta+\mu_0\big]\big(\delta_{ik}\delta_{jl}{+}\delta_{il}\delta_{jk}\big)               
\end{equation}
where $\lambda_1,\mu_1$ and $\lambda_0,\mu_0$ are two sets of Lam\'e parameters satisfying
$\lambda_1\ge\lambda_0\ge0$ and $\mu_1\ge\mu_0>0$. Here, $\delta$ denotes the Kronecker 
symbol. This choice implies that the elastic-moduli tensor 
is  positive-definite-valued (and therefore invertible). 
The elastic domain $S$ is assumed to satisfy 
\begin{equation}\label{S_def}
S=\big\{\sigma\in\DEV;\ |\sigma|\le\sY\big\},
\end{equation}
where $\sY>0$ is a given plastic yield stress. 
More specifically, the minimization problems 
\eqref{dam-small-II-LS-min} after spatial discretisation rewrite as
\begin{subequations}\label{minimization-implem}\begin{align}\nonumber
&(u_{\tau h}^k,\pi_{\tau h}^k) = 
\!\!\!\!\!\!\!\!\!\!\!\!
\argmin_{\substack{u\in W^{1,\infty}(\Omega;\R^d)\\\pi\in W^{1,\infty}(\Omega;{\DEV})\\u,\pi\text{ elementwise affine on }\mathscr T_h}}
\!\!\!\!\!\!\!\!\!\! 
\int_{\Omega}\bigg(\frac12\mathbb C(\zeta_{\tau h}^{k-1})\big(e(u{+}\uDtauhk){-}\pi\big):\big(e(u{+}\uDtauhk){-}\pi \big) 
\\[-2.3em]&\hspace{14em}+\frac12\mathbb{H}\pi{:}\pi+\frac{\kappa_1}2|\nabla\pi|^2\!
-g_{\tau h}^k{\cdot}u+ \sY | \pi{-}\pi_{\tau h}^{k-1}| \bigg)\dd x
,\!\!
\label{minimization-1-implem}\\\nonumber
&\zeta_{\tau h}^k   = 
\!\!\!\!\!\!\!\!\!\!
\argmin_{\substack{\zeta\in W^{1,\infty}(\Omega)\\0\le\zeta\le1\text{ on }\Omega\\\zeta\text{ elementwise affine on }\mathscr T_h}}
\!\!\!\!\!\!\!\!\!\! 
\int_{\Omega}\bigg(\frac12\mathbb C(\zeta)\big(e(u_{\tau h}^k{+}\uDtauhk){-}\pi_{\tau h}^k\big)
:\big(e(u_{\tau h}^k{+}\uDtauhk){-}\pi_{\tau h}^k\big) 
 \\[-2.3em] &\hspace{15.3em}
+\frac{\kappa_2}2|\nabla\zeta|^2\!+ a( \zeta{-} \zeta_{\tau h}^{k-1})^- + b( \zeta{-} \zeta_{\tau h}^{k-1})^+
\bigg)\dd x. 
\label{minimization-2-implem}\end{align}\end{subequations}

The damage problem \eqref{minimization-2-implem} represents a 
minimization of a nonsmooth but strictly convex functional. 
To facilitate its numerical solution, we still modify it a bit, namely
\def\ZZ{z}
\begin{subequations}\begin{align}\nonumber
&\!\!\!\!\!\!\!\!\!\!\!\!\!\!\!\!\!\!
\argmin
_{\substack{\zeta,\,\zeta^\vartriangle,\,\zeta^\triangledown\in W^{1,\infty}(\Omega)
\\0\le\zeta\le1\text{ on }\Omega
\\\zeta,\zeta^\vartriangle,\zeta^\triangledown\text{ elementwise affine on }\mathscr T_h}}\!\!\!\!\!
\int_{\Omega}\bigg(\,\frac12
\mathbb C (\zeta) \big(e(u_{\tau h}^k{+}\uDtauhk){-}\pi_{\tau h}^k\big)
:\big(e(u_{\tau h}^k{+}\uDtauhk){-}\pi_{\tau h}^k\big) 
\\[-2.3em]&\hspace{19.5em}
+\frac{\kappa_2}2|\nabla\zeta|^2+a\zeta^\vartriangle+b\zeta^\triangledown\bigg)\dd x,
\label{minimization-2-QP}
\\[-.1em]\label{minimization-2-QP+}
&\text{where }\ \ 
\zeta^\vartriangle = (\zeta{-}\zeta_{\tau h}^{k-1})^+\ \ \text{ and }
\ \ \zeta^\triangledown = (\zeta{-}\zeta_{\tau h}^{k-1})^-\ \text{ at all nodal points}.
\end{align}\end{subequations}
We used additional auxiliary `update' variables $\zeta^\vartriangle$ and 
$\zeta^\triangledown$ which are also considered
as P1-functions. This modification can also be understood as a certain 
specific numerical integration applied to the original minimization problem 
\eqref{minimization-2-implem}.
It should be noted that $\zeta$ and $\zeta_{\tau h}^{k-1}$ are 
P1-functions and, if we would require \eqref{minimization-2-QP+}
valid everywhere on $\Omega$, 
$\zeta^\vartriangle$ and $\zeta^\triangledown$ could not be P1-functions in 
general on elements where nodal values of $\zeta{-}\zeta_{\tau h}^{k-1}$ 
alternate signs. The important advantage of \eqref{minimization-2-QP+} 
required only at nodal points while at remaining points it is fulfilled only approximately 
(depending on $h$) is that \eqref{minimization-2-QP} actually represents a 
conventional {\it quadratic-programming} problem (QP) involving the linear and the box constraints
\begin{equation}\label{linear_and_box_constraints_QP}
\zeta = \zeta_{\tau h}^{k-1} + \zeta^\vartriangle  - \zeta^\triangledown, \qquad 
0\le\zeta^\vartriangle\le1-\zeta_{\tau h}^{k-1},
\qquad 
0\le\zeta^\triangledown\le\zeta_{\tau h}^{k-1}.
\end{equation}
A convex quadratic cost functional of this QP problem has only a positive-semidefinite Jacobian, 
since there are no Dirichlet boundary conditions on the damage variable $\zeta$. 
Note that the optimal 
pair $(\zeta^\vartriangle, \zeta^\triangledown)$ must satisfy $\zeta^\vartriangle\zeta^\triangledown {=} 0$ in all nodes, i.e.\ both 
variables cannot be positive. This can be easily seen by contradiction: 
If $\zeta^\vartriangle  \zeta^\triangledown {>} 0$ in some node, then a different pair 
$(\zeta^\vartriangle{-}\min\{\zeta^\vartriangle,\zeta^\triangledown\},\zeta^\triangledown{-}\min\{\zeta^\vartriangle,\zeta^\triangledown\})$ would again satisfy 
the constraints \eqref{linear_and_box_constraints_QP} but would provide a smaller 
energy value in \eqref{minimization-2-QP}.

As we have a-priori bounds of $\zeta$ in $W^{1,r}(\Omega)$ uniformly in $t$, $\tau$, 
and $h$ also if the modified problem \eqref{minimization-2-QP+} is considered
(disregarding that we used $r=2$ above), we have estimates also in H\"older spaces 
also for $\zeta^\vartriangle$ and $\zeta^\triangledown$ and
can show that the constraints \eqref{minimization-2-QP+} are valid 
everywhere on $\Omega$ in the limit for $h\to0$. Thus, an analogy of 
Proposition~\ref{ch4:prop-dam-plast} for a successive limit passage 
$h\to0$ and then $\tau\to0$ might be obtained, although it does not
have much practical importance for situations when $(h,\tau)\to(0,0)$
simultaneously.

A similar modification can be used also for \eqref{minimization-1-implem}.
In addition, one can then exploit 
the structure of the cost functional being the sum of a quadratic functional and 
a nonsmooth convex functional with the epi-graph having a ``ice-cream-cone'' shape. 
After introduction of auxiliary variables at each element, 
it can be transformed to a so-called second-order cone programming 
problem (SOCP), cf.\ \cite[Sect.\,3.6.3]{MieRou15RIST}, for which efficient
codes exist.

Other way is to use simply the quasi-Newton iterative method. 
This option was used also here.

\section{\large ILLUSTRATIVE 2-DIMENSIONAL EXAMPLES}
\label{sect-simul}

Finally, we demonstrate both the relevance of the model together with 
the solution concept from Sect.\,\ref{sec-model}-\ref{sect-LS}
and the efficiency and convergence of the discretisation scheme from 
Sect.\,\ref{sec-disc} together with the implementation from 
Sect.\,\ref{sect-BEM} on a two-dimensional example. 

\medskip\noindent{\sl The material}: We consider an isotropic homogeneous material
with the elastic properties given by Young's modulus $E_{_\mathrm{Young}}\!\!\!\!=27\,$GPa 
and Poisson's ratio $\nu=0.2$ in the non-damaged state, which means that 
the elastic-moduli tensor in the form \eqref{C-ass-implem} takes
$\lambda_1 = 7.5\,\mbox{GPa}$ and $\mu_1=11.25\,\mbox{GPa}$, while the damaged material uses 
$10^7$-times smaller moduli, i.e.\ $\lambda_0= 750\,\mbox{Pa}$ and $\mu_0=112.5\,\mbox{Pa}$
in \eqref{C-ass-implem}.
The yield stress from \eqref{S_def} and the kinematic hardening parameter are chosen as 
$\sigma_{\mbox{\tiny\rm Y}}=2\,\mbox{MPa}$ and $\mathbb H=(E_{_\mathrm{Young}}/20)\mathbb I$.
The activation energy for damage is $a=1.2\,\mbox{kPa}$ 
and the damage length-scale coefficient is $\kappa_2=0.001\,\mbox{J/m}$;
the healing (used before for analytical reasons) was effectively not considered, 
cf.\ Remark~\ref{rem-no-healing}.

\begin{figure}[th]
\begin{center}
\psfrag{W}{\small $\Omega$}
\psfrag{GN}{\footnotesize $\GN$}
\psfrag{GD}{\footnotesize $\GD$}
\psfrag{GDN}{\footnotesize $\GD/\GN$}
\psfrag{uD}{\footnotesize $\uD\!=\uD^{}(t)$}
\psfrag{5/6}{\footnotesize $\displaystyle{\frac56}$}
\psfrag{1}{\footnotesize $1$\,m}
\includegraphics[width=.9\textwidth]{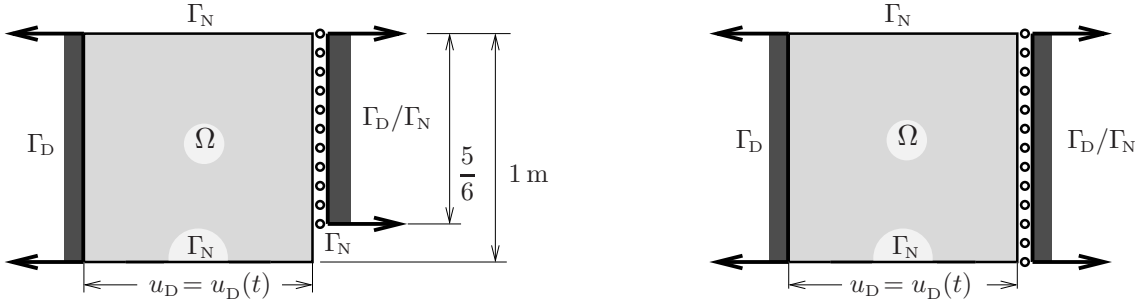}
\end{center}
\vspace*{-1em}
\caption{\sl Geometry of a 2-dimensional square-shaped specimens 
subjected to two tension-loading experiments; the 
right-hand side of the rectangle $\Omega$ combines Dirichlet condition in the horizontal
direction and homogeneous Neumann condition in the vertical direction.}
\label{fig4-dam-plast-geom}
\end{figure}
\medskip\noindent{\sl The specimen and its loadings}:
We consider a 2-dimensional square-shaped specimen subjected to two slightly 
different loading regimes. Both of them consist in a pure ``hard-devise'' horizontal 
load by Dirichlet boundary conditions with the left-hand side $\GD$ fully fixed while the 
right-hand side $\GD/\GN$ combines time-varying Dirichlet condition in the 
horizontal direction with the Neumann condition in the vertical direction.
The only (intentionally small) difference is in keeping a small
bottom part of this vertical side free (see Fig.\,\ref{fig4-dam-plast-geom}-left)
or not (see Fig.\,\ref{fig4-dam-plast-geom}-right).
As our model is fully rate-independent, the time scale is irrelevant
and we thus consider a dimensional-less process time $t\in[0,80]$ 
controlling the linearly growing hard-devise (=\,Dirichlet) load until 
the maximal horizontal shift 80\,mm of the right-hand side $\GD/\GN$.


\medskip\noindent{\sl The discretisation}: In comparison with Section~\ref{sect-BEM}, we 
dare make a shortcut by neglecting the gradient term $\nabla\pi$ in the stored energy 
\eqref{eq5:plast-dam-E} by putting $\kappa_1=0$, which allows for using only P0-elements 
for $\pi$. It also allows for transformation of the cost functional of
\eqref{minimization-1-implem} to a functional of the variable $u$ only by substituting 
the elementwise dependency of $\pi$ on $u$, see \cite{AlCaZa99ANAPEH,CeKoSyVa14TDDSEP} 
for more details. Then, the quasi-Newton iterative method mentioned in Section~\ref{sect-BEM} 
is applied to solve $u_{\tau h}^k$ while $\pi_{\tau h}^k$ is reconstructed from it. More 
details on this specific elasto-plasticity solver can be found e.g.\ in 
\cite{CeKoSyVa14TDDSEP,GKNT10ANAT,GruVal09SOTSPESNM}. 
Here, the spatial P1/P0 FEM discretisation of the rectangular domain $\Omega$ 
uses a uniform triangular mesh with $2304$ 
elements and $1201$ nodes. The code was implemented in Matlab, being
available for download and testing at Matlab Central as a 
package \href{http://www.mathworks.com/matlabcentral/fileexchange/49943-continuum-undergoing-combined-elasto-plasto-damage-transformation}{\textit{Continuum undergoing combined elasto-plasto-damage transformation}}, cf.\ \cite{code}.
It is based on an original elastoplasticity code related to multi-threshold models \cite{BrCaVa05QSBVPMS},
here simplified for a single-threshold case. 
It partially utilizes vectorization techniques of \cite{RaVa13FMAFEM2D3D} 
and works reasonably fast also for finer triangular meshes. In contrast to the fixed 
spatial discretisation, we consider three time discretisation to document the convergence 
(theoretically stated only for unspecified subsequences in 
Proposition~\ref{ch4:prop-dam-plast}) on particular computational experiments. More
specifically, we used three time steps $\tau=1$, $0.1$, or $0.01$, i.e.\ the
equidistant partition of the time interval $[0,80]$ to 80, 800, or 8000 time steps, 
respectively.
%
%

\medskip\noindent{\sl Simulation results}:
The averaged stress/strain (or rather force/stretch) response is depicted 
in Figure~\ref{fig-stress}. Notably, after damage is completed, some stress 
still remains (as is nearly independent on further stretch because the elastic 
moduli $\lambda_0$ and $\mu_0$ are considered very small). These remaining 
stresses are caused by non-uniform plastification of the specimen during 
the previous phases of the loading. 
\begin{figure}
\center
\includegraphics[width=0.39\textwidth]{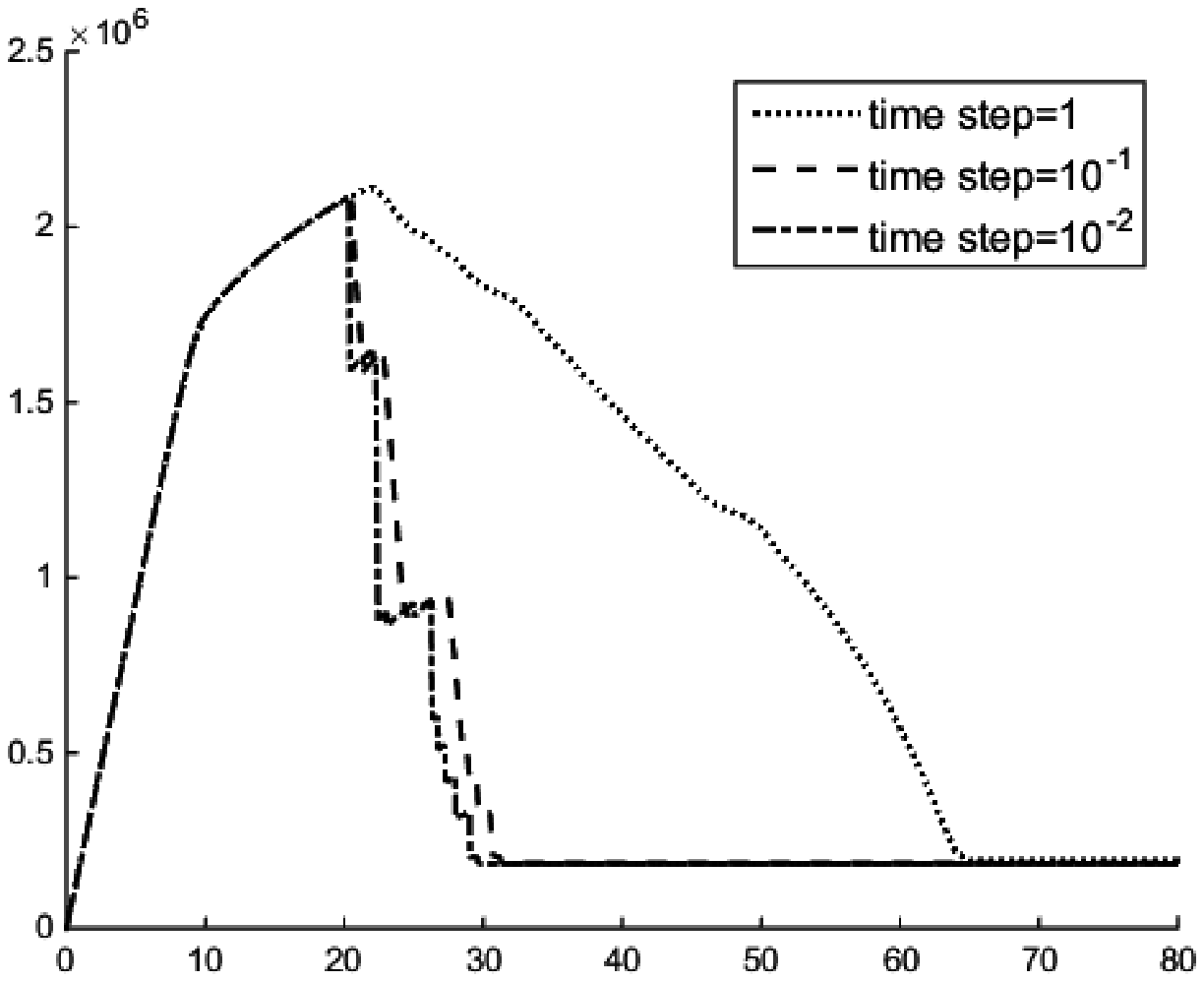}\hspace*{3em}
\includegraphics[width=0.39\textwidth]{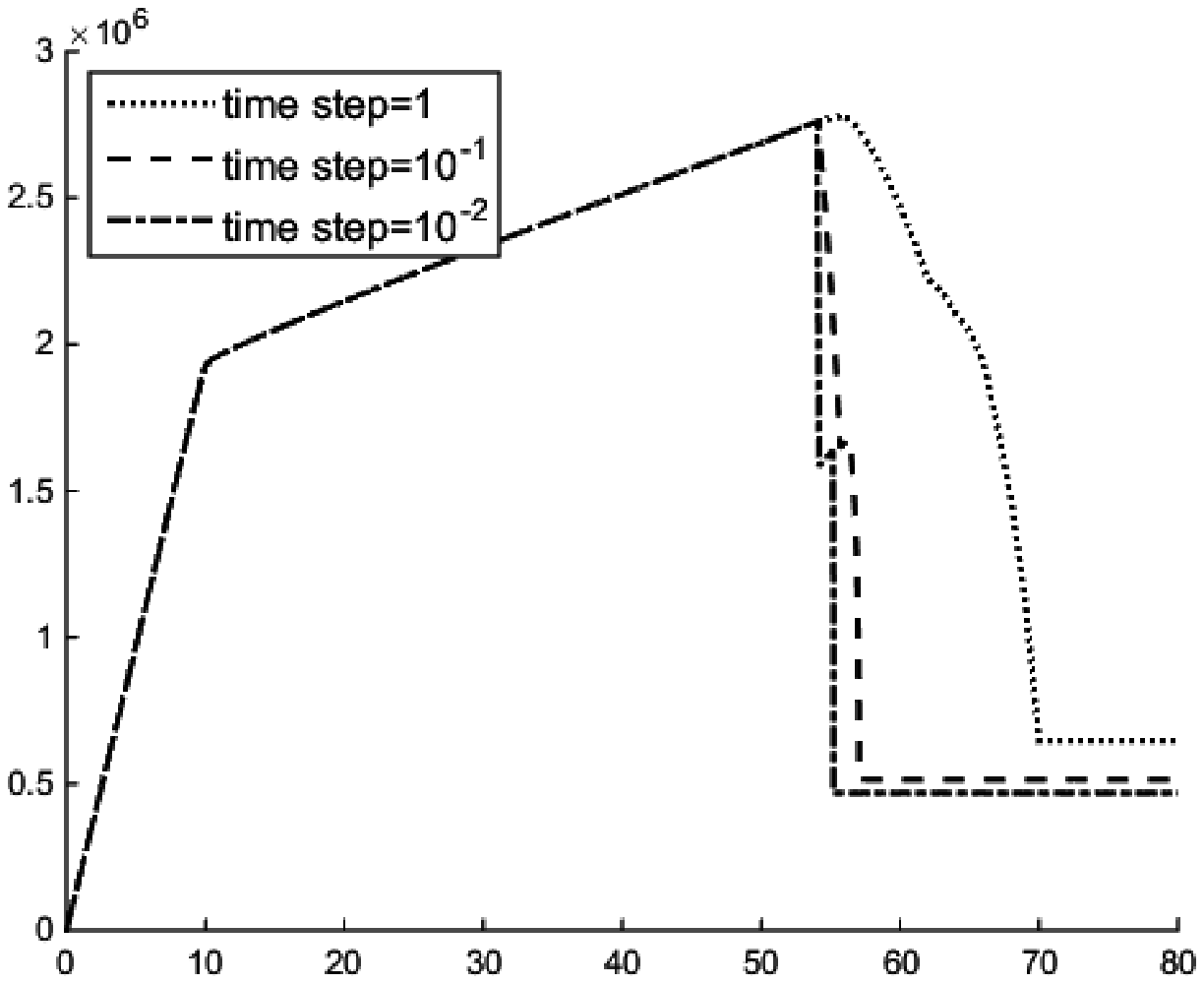}
\caption{\sl Evolution of averaged elastic von Mises stresses $\int_\Omega|\dev\sigma_{\rm el}|\,\d x$
over time for the two experiments from Fig.\,\ref{fig4-dam-plast-geom} for three different
time discretisations. The left 
experiment ruptures earlier under less stretch and leaving less plastification and 
remaining stress comparing to the right experiment. In both experiments, the convergence
theoretically supported by Proposition~\ref{ch4:prop-dam-plast} is well documented.
}
\label{fig-stress}\end{figure}
One can also note that Figure~\ref{fig-stress}-right imitates quite well the 
scenario from Figure~\ref{fig5:plast-dam}-right while Figure~\ref{fig-stress}-left 
is rather a mixture of both regimes from Figure~\ref{fig5:plast-dam} and,
interestingly, the rupture proceeds in three stages. The respective spacial 
distribution of the evolving state variable is depicted at few selected 
instants on Figures~\ref{fig-movie-asym} and \ref{fig-movie-sym}. 
\begin{figure}
\center
{\scriptsize\bf DAMAGE $\zeta$\hspace{2.5em}PLASTIC STRAIN $|\pi|$\hspace{2em}STRESS $|\dev\sigma_{\mathrm{el}}|$\hspace{2.5em}RESIDUUM ${\rm log}\,|R|$\hspace*{3em}}
\\[.5em]\includegraphics[width=0.75\textwidth]{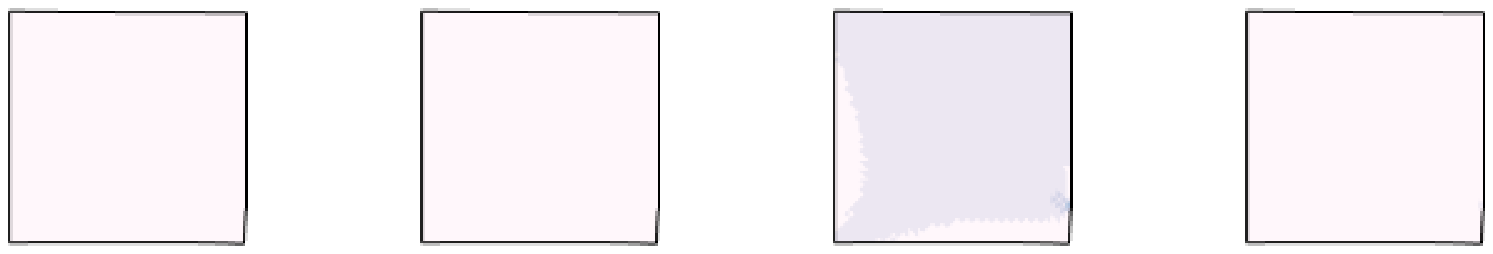}\quad\TIME{4\ \ }
\\[.3em]
\includegraphics[width=0.75\textwidth]{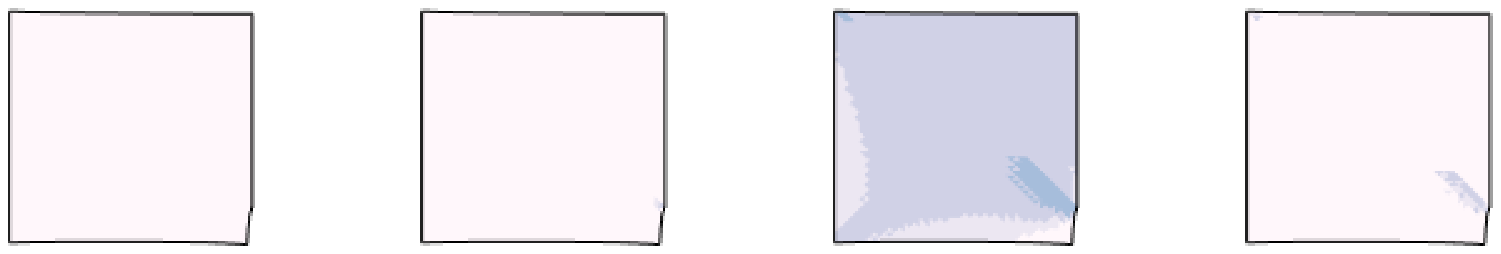}\quad\TIME{8\ \ }
\\[.3em]
\includegraphics[width=0.75\textwidth]{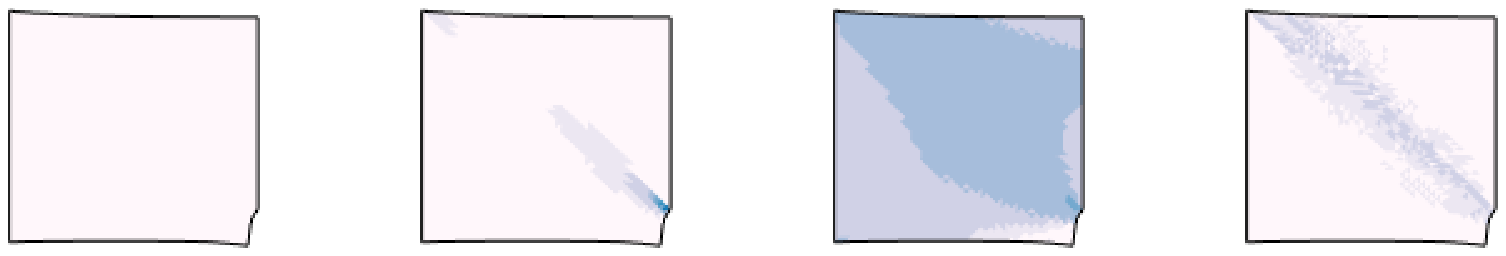}\quad\TIME{12}
\\[.3em]
\includegraphics[width=0.75\textwidth]{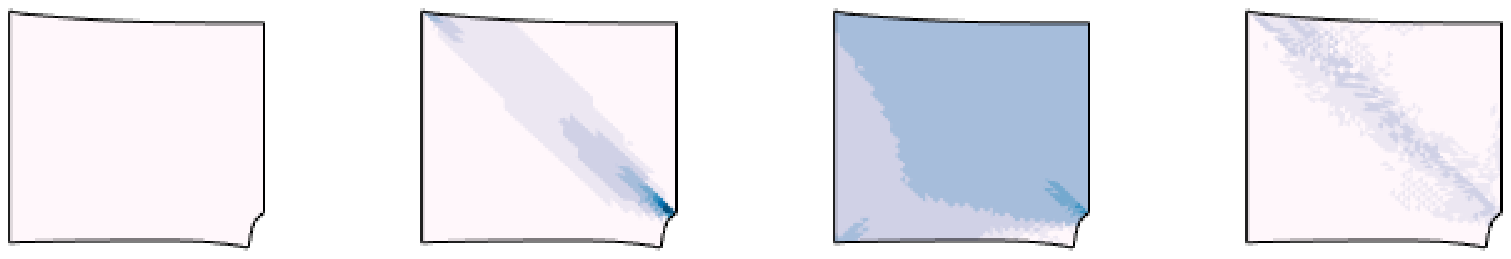}\quad\TIME{16}
\\[.3em]
\includegraphics[width=0.75\textwidth]{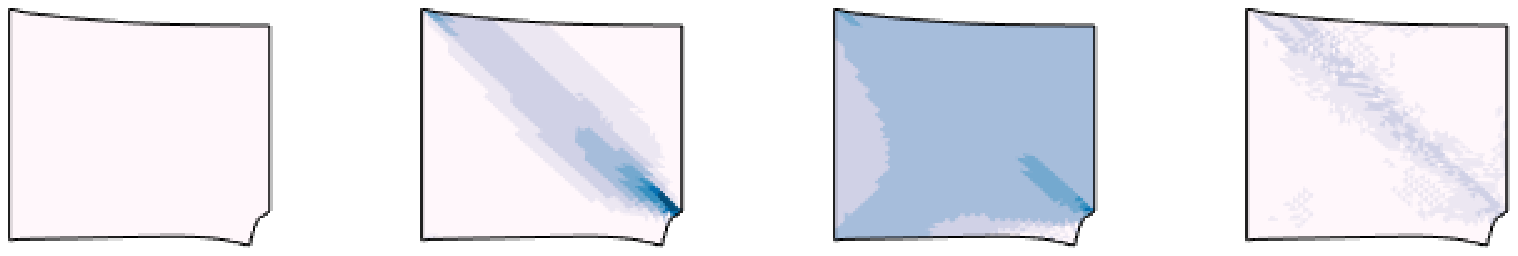}\quad\TIME{20}
\\[.3em]
\includegraphics[width=0.75\textwidth]{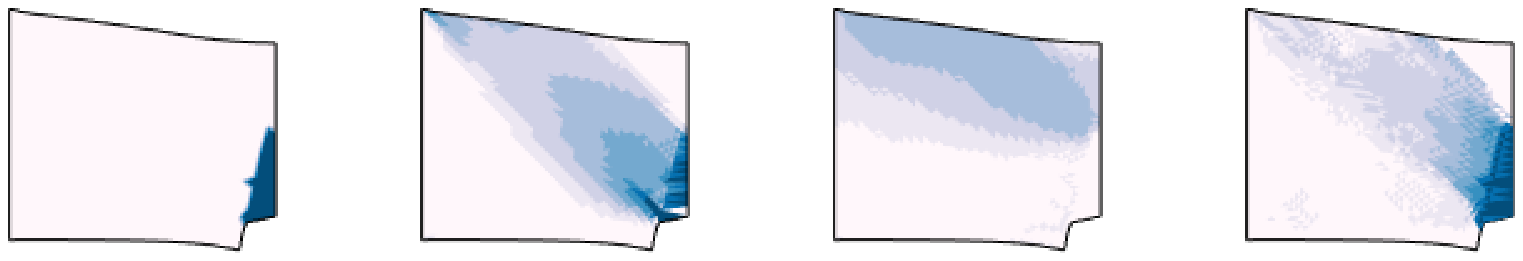}\quad\TIME{24}
\\[.3em]
\includegraphics[width=0.75\textwidth]{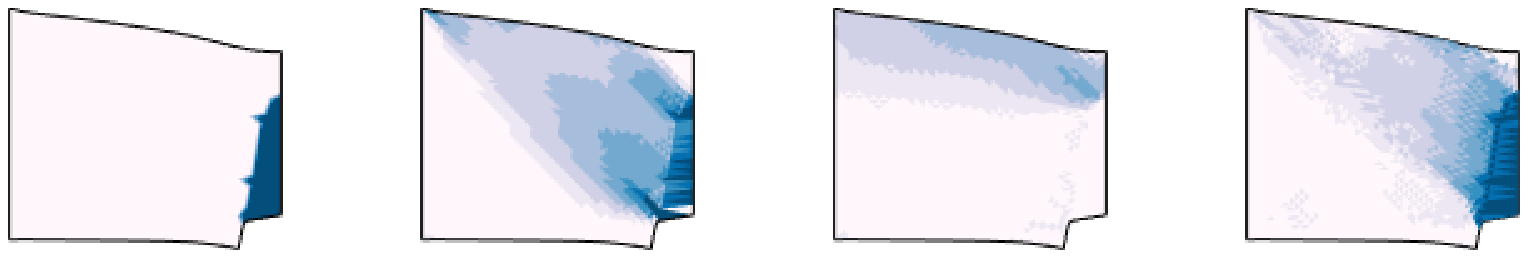}\quad\TIME{28}
\\[.3em]
\includegraphics[width=0.75\textwidth]{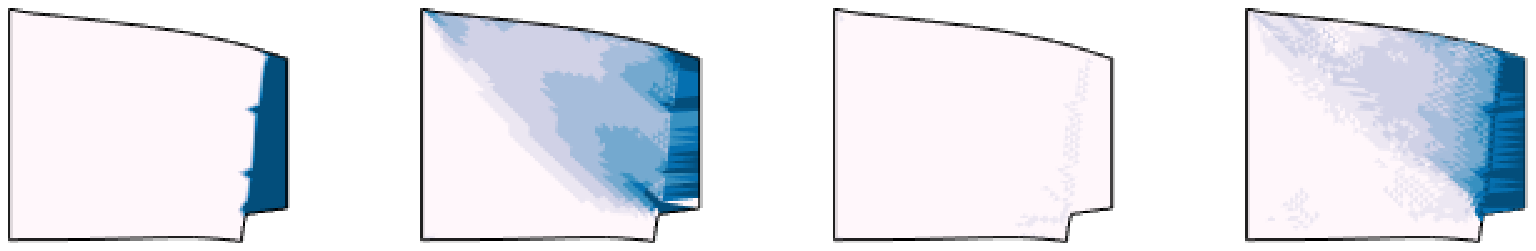}\quad\TIME{32}
\\[.3em]
\hspace{0.02\textwidth}\includegraphics[width=0.73\textwidth]{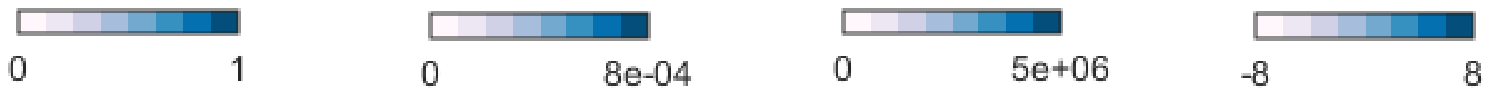}
\hspace{0.1\textwidth}
\caption{\sl Evolution of spatial distribution of the state $(u,\pi,\zeta)$ with also
the von Mises stress and the residuum $R$ from \eqref{rhs-to-evaluate} 
at (equidistantly) selected instants
for the asymmetric geometry from Fig.\,\ref{fig4-dam-plast-geom}-left.
The deformation is visualized by a displacement $u$ magnified 250$\,\times$, and $\tau=0.1$ was used. 
Damage occurs relatively early on the right-hand side due to the stress 
concentration and propagates in several partial steps, 
cf.\ Fig.\,\ref{fig-stress}-left.}
\label{fig-movie-asym}
\end{figure}
\begin{figure}
\center
{\scriptsize\bf DAMAGE $\zeta$\hspace{2.5em}PLASTIC STRAIN $|\pi|$\hspace{2em}STRESS $|\dev\sigma_{\mathrm{el}}|$\hspace{2.5em}RESIDUUM ${\rm log}\,|R|$\hspace*{3em}}
\\[.5em]
\includegraphics[width=0.75\textwidth]{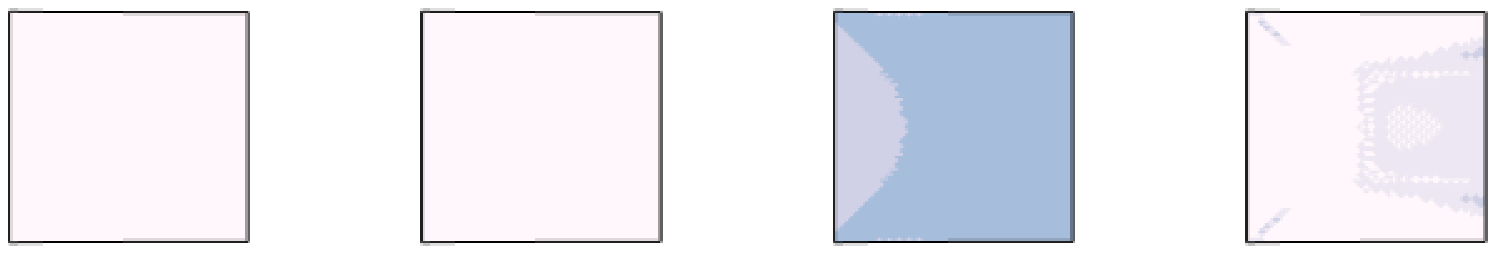}\quad\TIME{10}
\\[.3em]
\includegraphics[width=0.75\textwidth]{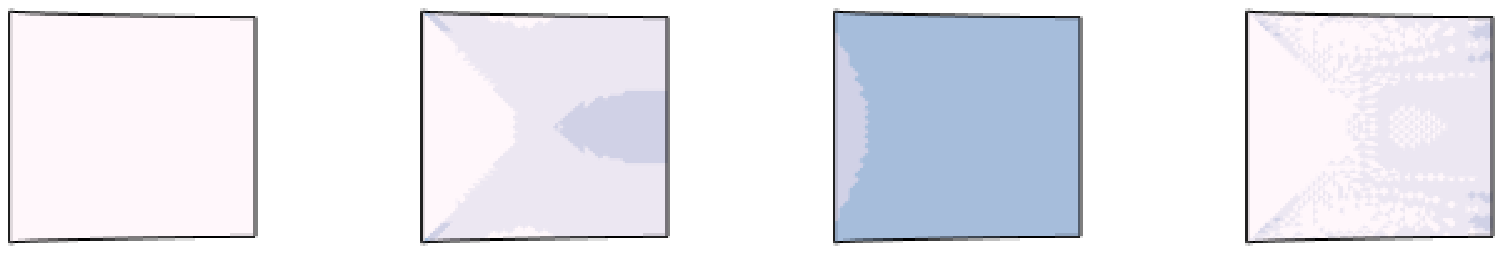}\quad\TIME{20}
\\[.3em]
\includegraphics[width=0.75\textwidth]{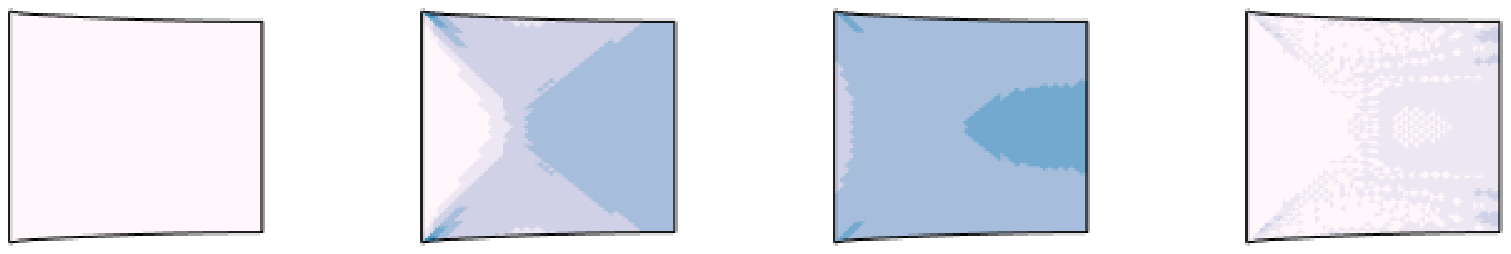}\quad\TIME{30}
\\[.3em]
\includegraphics[width=0.75\textwidth]{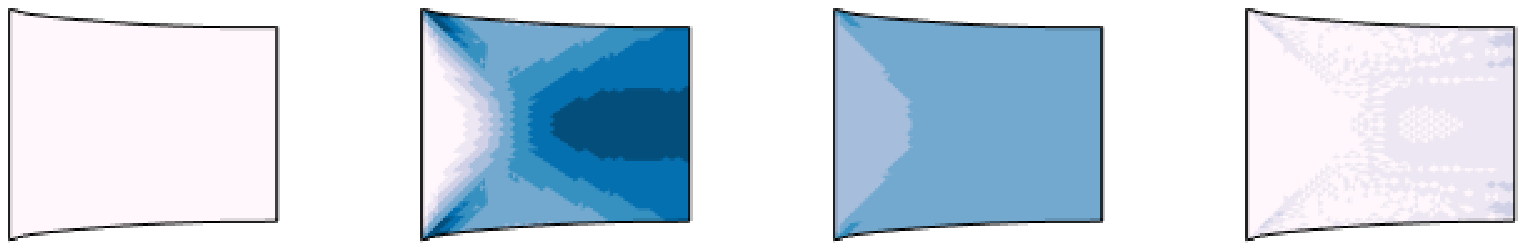}\quad\TIME{50}
\\[.3em]
\includegraphics[width=0.75\textwidth]{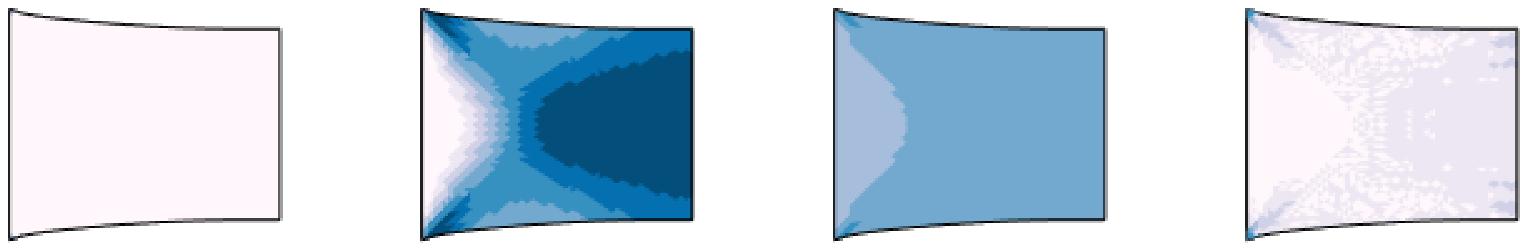}\quad\TIME{54}
\\[.3em]
\includegraphics[width=0.75\textwidth]{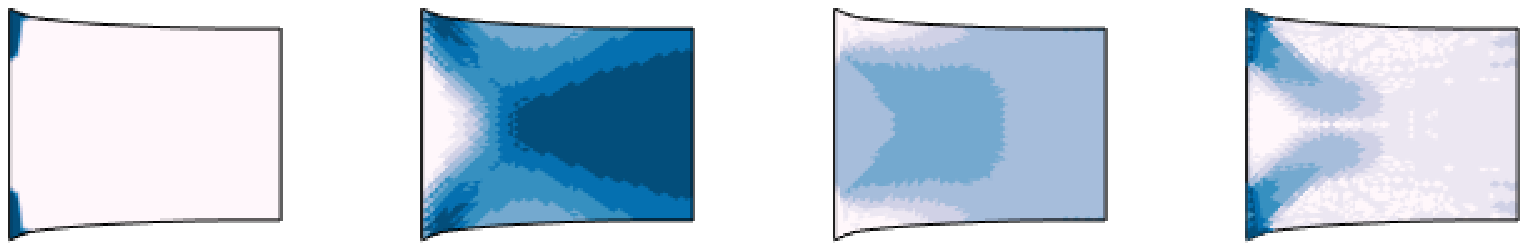}\quad\TIME{55}
\\[.3em]
\includegraphics[width=0.75\textwidth]{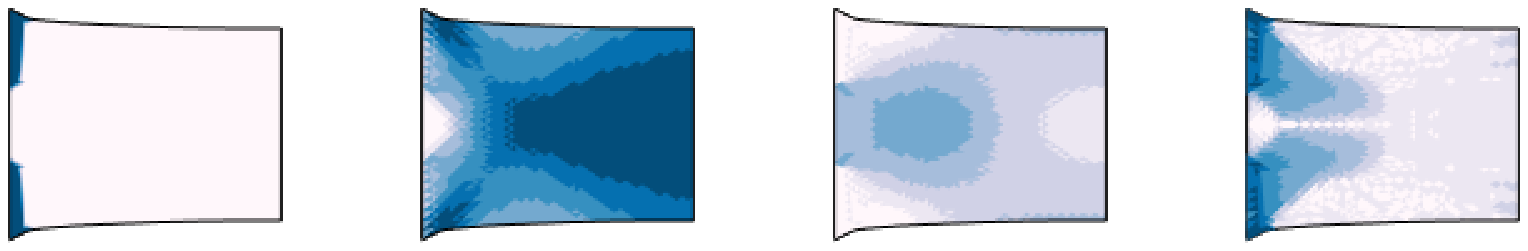}\quad\TIME{56}
\\[.3em]
\includegraphics[width=0.75\textwidth]{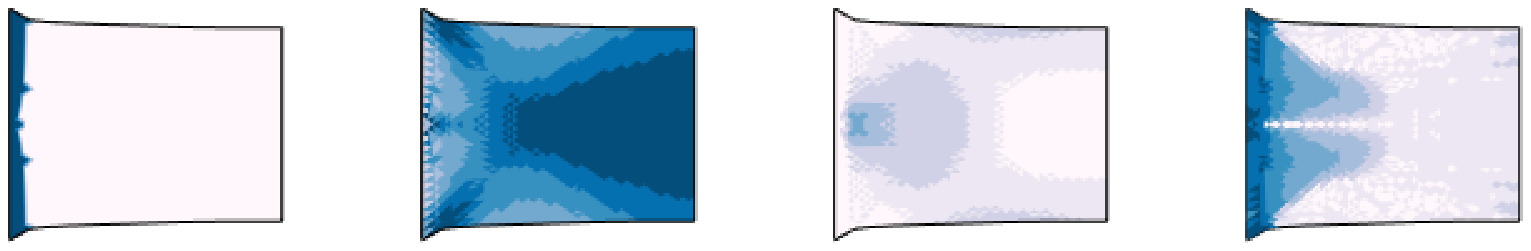}\quad\TIME{57}
\\[.3em]
\includegraphics[width=0.75\textwidth]{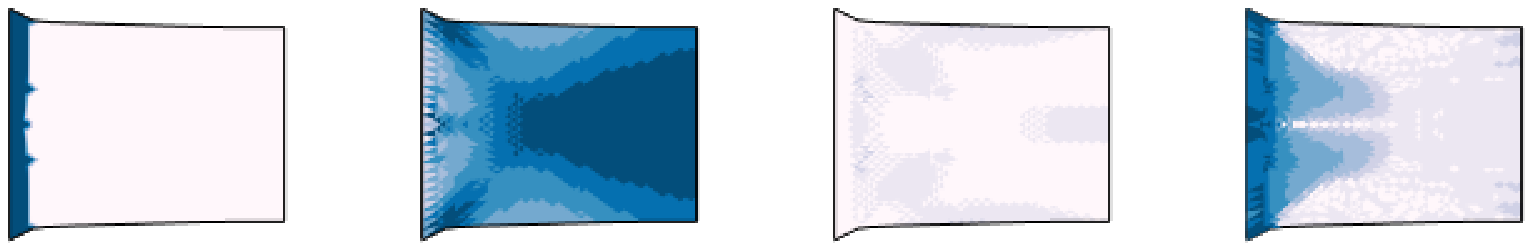}\quad\TIME{60}
\\[.3em]
\hspace{0.02\textwidth}\includegraphics[width=0.73\textwidth]{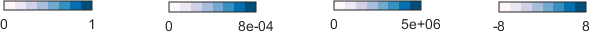}
\hspace{0.1\textwidth}
\caption{\sl Evolution of spatial distribution of the state $(u,\pi,\zeta)$ with also
the von Mises stress and the residuum $R$ from \eqref{rhs-to-evaluate} at selected instants
for the symmetric geometry from Fig.\,\ref{fig4-dam-plast-geom}-right.
The deformation is visualized by a displacement $u$ magnified 250$\,\times$, and $\tau=0.1$ 
was used. The process inherits the symmetry of the specimen and loading. In contrast to 
Fig.\,\ref{fig-movie-asym}, damage occurs rather later on the left-bottom corner and 
propagates fast, hence the snapshots are selected not in an equidistant way here.}
\label{fig-movie-sym}
\end{figure}
It is well seen how a relatively small variation of geometry in Figure~\ref{fig4-dam-plast-geom}
dramatically changes the spatial scenario and triggers damage in very different spots
of the specimen. This is an expected notch-effect causing stress concentration 
and relatively early initiation of cracks at such spots, i.e.\ here such a notch is 
the point of the transition $\GN$ to $\GD/\GN$ in Figure~\ref{fig4-dam-plast-geom}-left.
The AMDP suggested in Remark~\ref{rem-DMP} is depicted in 
Figures~\ref{fig:residua_plus_diss_energy} and \ref{fig:residua}. It should 
be emphasized that the maximum-dissipation principle (as devised originally 
by Hill \cite{Hill48VPMP}) is reliably satisfied only for convex 
stored energies as occurs during mere plastification phase, as also 
seen in Figure~\ref{fig:residua}, while in general it does not need
to be satisfied even in obviously physically relevant stress-driven 
evolutions, as already mentioned in Remark~\ref{rem-IMDP} 
and which can be expected even here during massive fast rupture 
of a wider regions (but in spite of it, Figure~\ref{fig:residua_plus_diss_energy}
shows a good satisfaction of AMDP even during such rupture phases and
in some sense demonstrate a good applicability of the model and solution 
concept and its algorithmic realization).
%
%

\begin{remark}[{\sl Symmetry issue}]\label{rem-symmetry}
\upshape
Actually, one could understood the square 1$\,\times\,1$
in Fig.~\ref{fig4-dam-plast-geom} as one half of a rectangle with 
sides 2$\,\times\,1$ with the right-hand side of the 
 1$\,\times\,1$ square being the symmetry axis of the 2$\,\times\,1$
rectangle which is then loaded from the vertical
sides fully symmetrically. Engineers actually most routinely assume that such 
symmetry of this geometry would be inheritted by all (or at least by one) 
solution(s) and use the reduced geometries on Fig.~\ref{fig4-dam-plast-geom} 
for calculations of the full  2$\,\times\,1$-rectangle. We intentionally did 
not use this interpretation because, in fact, one can only
say that the set of all solutions inherits the (possible) symmetry of
the specimen and its loading but not particular solutions, and even 
it may be that there is no solution inheritting this symmetry or 
that experimental evidence shows preferences for nonsymmetric solutions. 
Cf. the discussion in \cite{GaMaGr15DFMI,PanMan13SNDF}. In addition,
the geometry in Fig.~\ref{fig4-dam-plast-geom}-left would
lead to a 2$\,\times\,1$ rectangle with a partial ``cut'' in the mid-bottom
side, which is not a Lipschitz domain.
\end{remark}

\begin{figure}
\center
\includegraphics[width=0.39\textwidth]{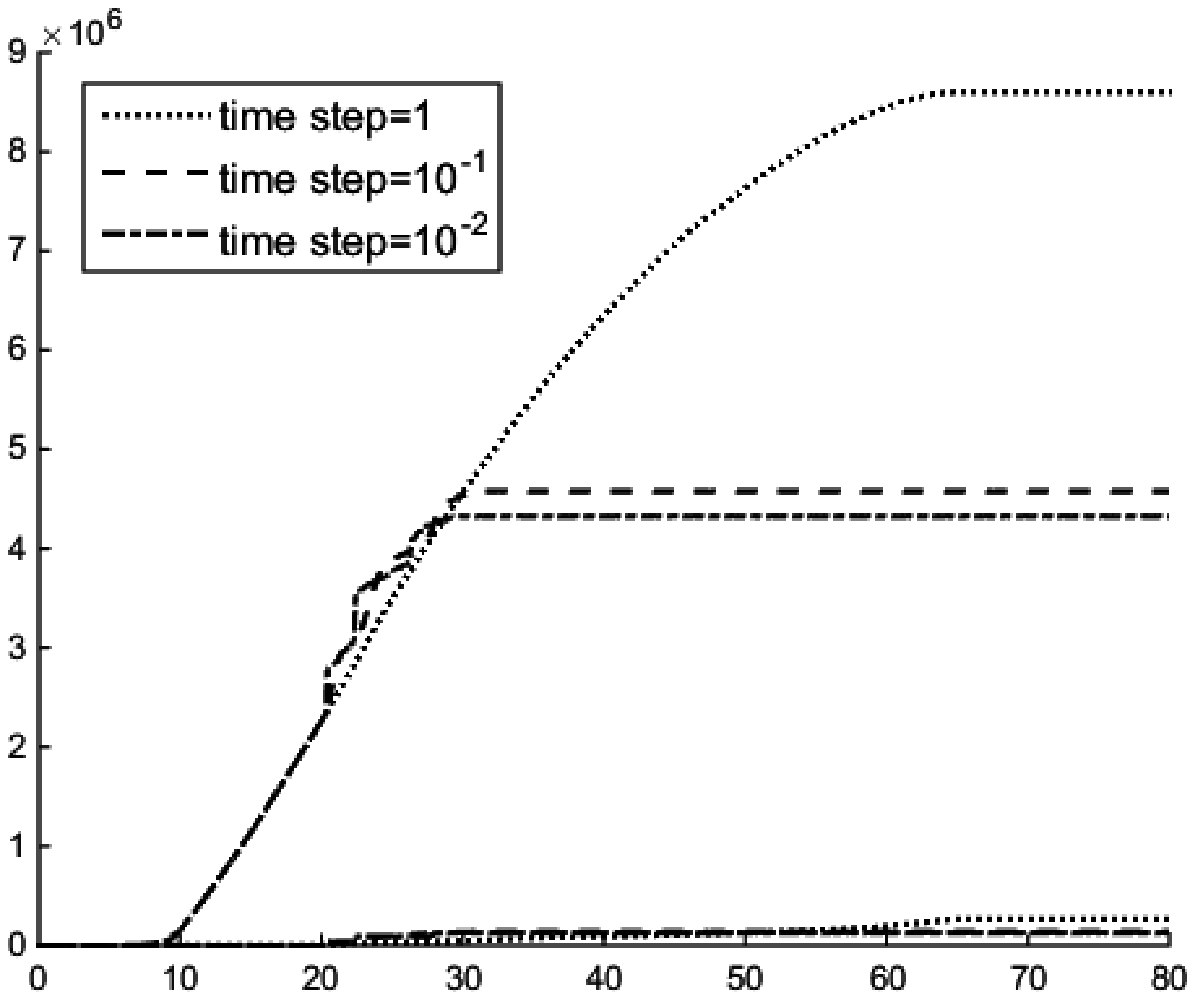}\hspace*{3em}
\includegraphics[width=0.39\textwidth]{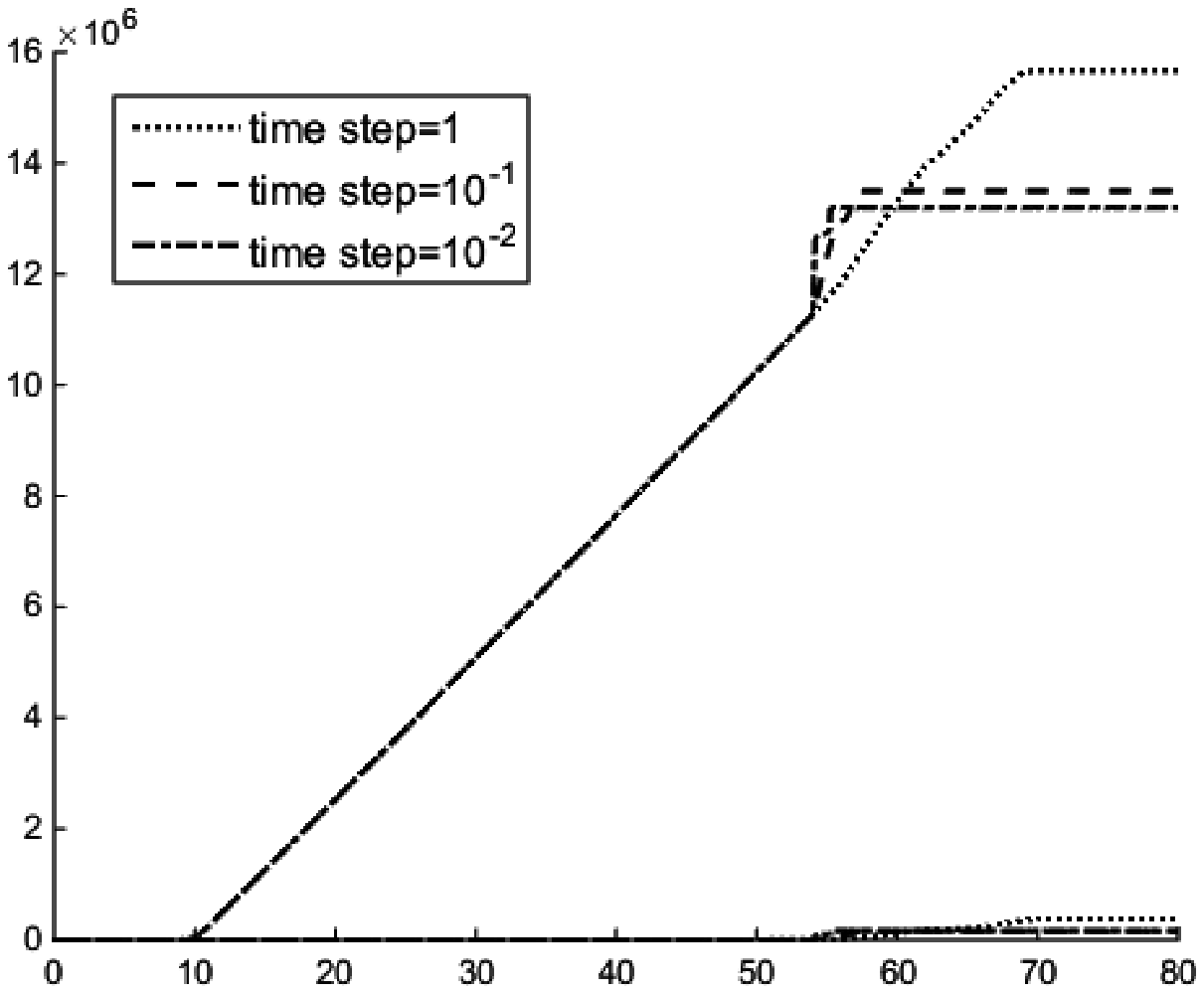}
\caption{\sl Evolution of the dissipated energy $\Diss_{\scrR}^{}(z;[0,t])$ 
(top 3 curves) and integrated residua $\int_0^t\int_\Omega R\,\d x\d t$ (bottom 3 curves) 
in two experiments from Figure\,\ref{fig4-dam-plast-geom}.
In particular, it again shows good tendency of convergence and, moreover, that 
the violation of the approximate maximum-dissipation principle is small with 
respect to the overall dissipated energy and the evolution was stress driven
with a good accuracy about 1-2\%.
}
\label{fig:residua_plus_diss_energy}\end{figure}

\begin{figure}
\center
\includegraphics[width=0.39\textwidth]{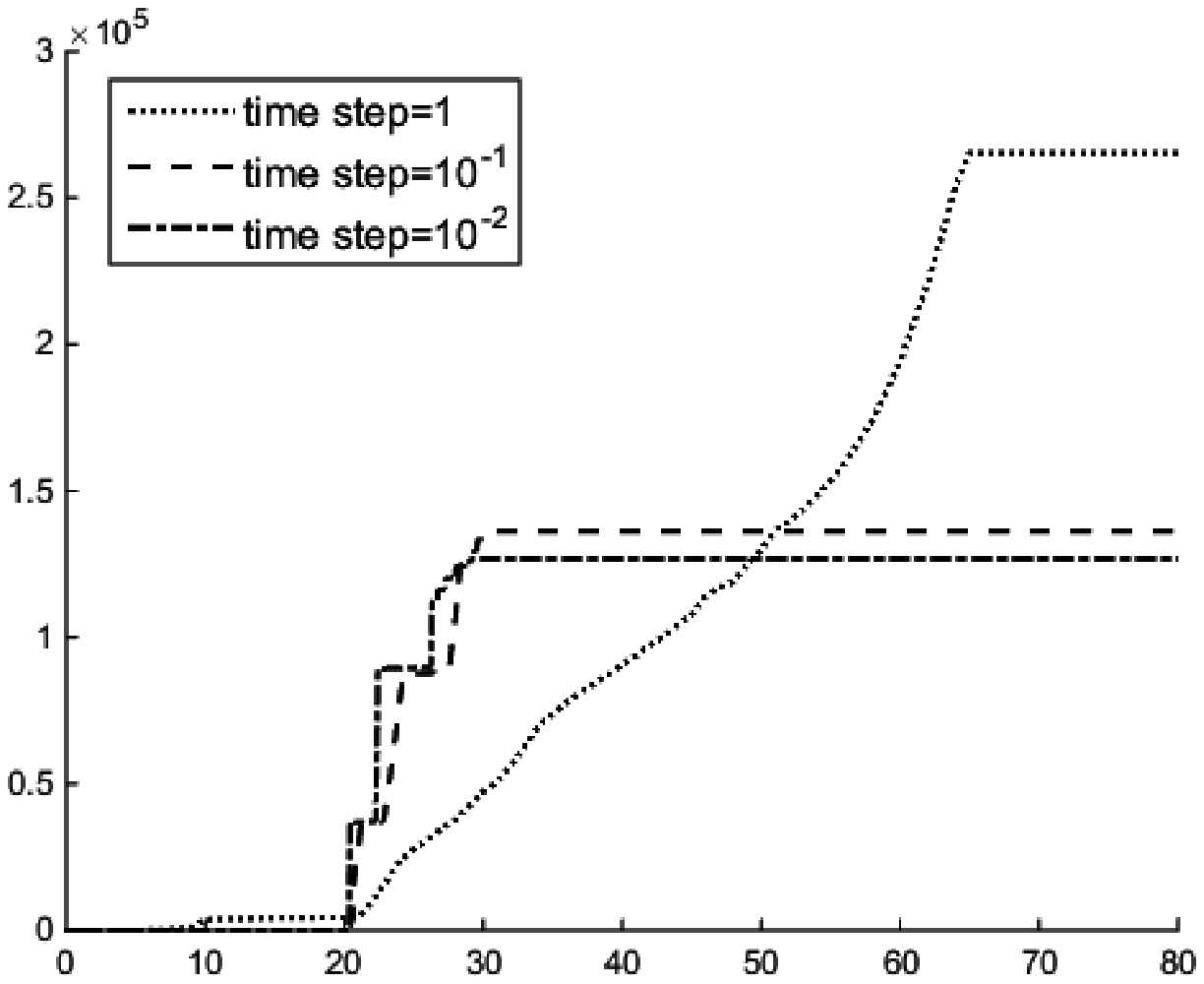}\hspace*{3em}
\includegraphics[width=0.39\textwidth]{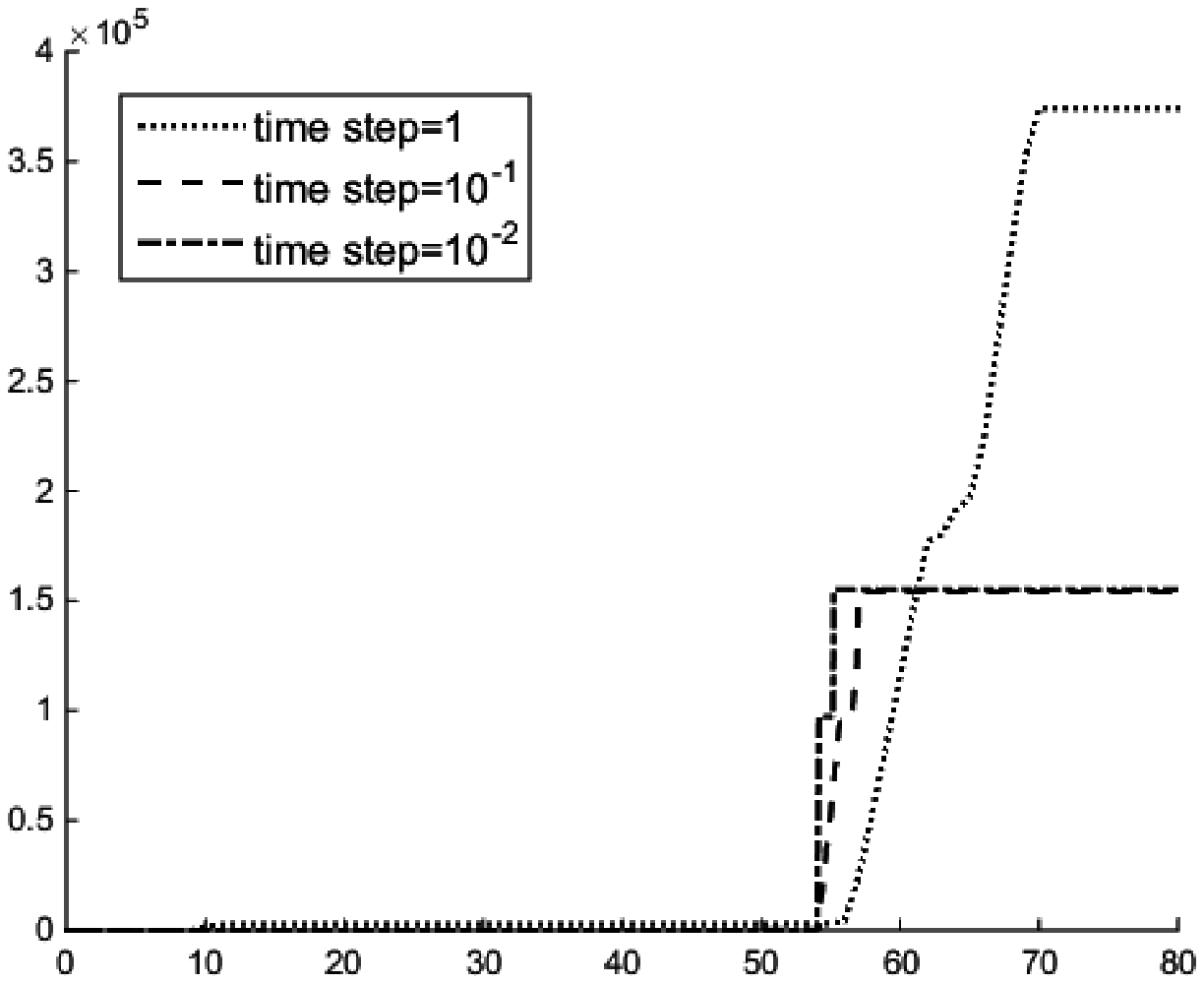}
\caption{\sl A detailed scaling of the bottom 3 curves (=\,the residua in AMDP) from 
Figure \ref{fig:residua_plus_diss_energy}.
A good convergence to zero is seen in plastification period while some residuum 
is generated during damage period where nonconvexity of the stored energy truly 
comes into effect. 
}
\label{fig:residua}\end{figure}

\begin{remark}[{\sl Recovery of the integrated maximum-dissipation principle IMDP}]\label{rem-MDP-recovery}
\upshape
It should be emphasized that, even if the intuitively straightforward AMDP is 
asymptotically satisfied, the recovery of even the less-selective IMDP 
\eqref{IMDP+} for $\tau\to0$ is not clear. This is obviously related with 
instability of IMDP under data perturbation if $\scrE(t,\cdot)$ is not convex. 
Here, to recover the IMDP on $I$,  it would suffice to show that for 
all $\eps>0$ there is $\tau_\eps>0$ such that
for any $0<\tau\le\tau_\eps$
it holds 
\begin{align}\label{stronger-AMDP-}
\sum_{k=1}^{T/\tau}\inf_{t\in[k\tau-\tau,k\tau]
}\big\langle\xi(t),z(k\tau){-}z(k\tau{-}\tau)\big\rangle
-\scrR\big(z(k\tau){-}z(k\tau{-}\tau)\big)\ge-\eps
\end{align}
for some selection $\xi(t)\in-\partial_z\scrE(t,u(t),z(t))$, cf.\ the definition
\eqref{def-of-RS-int} and realize that  the equi-distant partitions 
are cofinal in all partitions of $I$. This can be guaranteed only under rather strong conditions,
namely if, for all $\eps>0$, there is $\tau_\eps>0$ such that
for any $0<\tau\le\tau_\eps$,
the following strengthened version of the AMDP 
\begin{align}\label{stronger-AMDP}
\sum_{k=1}^{T/\tau_\eps}
\scrR\big(\barz_\tau(t_k)-\barz_\tau(t_{k-1})\big)-\big\langle\barxi_\tau(t),
\barz_\tau(t_k)-\barz_\tau(t_{k-1})\big\rangle\le\eps
\end{align}
holds for $t_k=k\tau_\eps$, any $t_{k-1}\le t\le t_k$, and some 
$\barxi_\tau(t)\!\in\!-\partial_z\scrE(t,\baru_\tau(t),\barz_\tau(t))$, 
and if 
$\barxi_\tau(t)\weak\xi(t)
$, which can be assumed due to the available a-priori estimates
used in the proof of Pro\-po\-sition~\ref{ch4:prop-dam-plast}.
Using also \eqref{dam-plast-conv} and the
(norm,weak)-upper semicontinuity of 
 $\partial_z\scrE(t,\cdot,\cdot)$, in the limit for $\tau\to0$, 
from such a strengthened AMDP, one can read 
$\sum_{k=1}^{T/\tau_\eps}
\scrR(z(t_k)-z(t_{k-1}))-\langle\xi(t),z(t_k)-z(t_{k-1})\rangle\le\eps$
for any $t_{k-1}\le t\le t_k$ and for some $\xi(t)\in-\partial_z\scrE(t,u(t),z(t))$,
from which \eqref{stronger-AMDP-} indeed follows. In fact, our intuitive version 
of AMDP from Remark~\ref{rem-DMP} computationally verified \eqref{stronger-AMDP}
in Figure~\ref{fig:residua} in particular examples for  
$\tau=\tau_\eps$ only.
And, on top of it, we would need \eq{stronger-AMDP} to be shown rather for
$\bar\xi_\tau
\in(-\partial_\pi\scrE(t,\baru_\tau(t),\barz_\tau(t{-}\tau)),
-\partial_\zeta\scrE(t,\baru_\tau(t),\barz_\tau(t)))$.
\end{remark}

{\small\it\noindent
Acknowledgments. This research has been supported by GA\,\v CR through the projects
13-18652S ``Computational modeling of damage and transport processes 
in quasi-brittle materials'' and 14-15264S 
``Experimentally justified multiscale modelling of shape memory alloys''
with also the also institutional support RVO:61388998 (\v CR).
}

\baselineskip=13pt

\bibliographystyle{plain}
\bibliography{tr-jv-plast-dam-MDP}

\end{document}

\COMMENT{
R. de Borst, J. Pamin, M. Geers, On coupled gradient-dependent plasticity and damage theories with a view to localization analysis.,
Eur. J. Mech. A/Solids, 18, 939–962, (1999).
\\
B. Nedjar, Elastoplastic-damage modelling including the gradient of damage: formulation and computational aspects., International
Journal of Solids and Structures, 38, 5421–5451, (2001).
\\
B. J. Dimitrijevic, K. Hackl, A method for gradient enhancement of continuum damage models., Technische Mechanik, 28, 43–52,
(2008).
}

\COMMENT{BTW, WHAT IS IN ``On the propagation of maximally dissipative phase 
boundaries in solids'' by Abeyaratne, R. and Knowles, J. K.,
Q. Appl. Math., 50 (1992), 149-172??}

============================ TRASH ===========================

\begin{figure}
\center
\includegraphics[width=0.75\textwidth]{pictures_nonsymetric/fields3}\qquad t=3
\\[.3em]
\includegraphics[width=0.75\textwidth]{pictures_nonsymetric/fields6}\qquad  t=6
\\[.3em]
\includegraphics[width=0.75\textwidth]{pictures_nonsymetric/fields9}\qquad  t=9
\\[.3em]
\includegraphics[width=0.75\textwidth]{pictures_nonsymetric/fields12}\qquad  t=12
\\[.3em]
\includegraphics[width=0.75\textwidth]{pictures_nonsymetric/fields15}\qquad  t=15
\\[.3em]
\includegraphics[width=0.75\textwidth]{pictures_nonsymetric/fields18}\qquad  t=18
\\[.3em]
\includegraphics[width=0.75\textwidth]{pictures_nonsymetric/fields21}\qquad  t=21
\\[.3em]
\includegraphics[width=0.75\textwidth]{pictures_nonsymetric/fields24}\qquad  t=24
\\[.3em]
\includegraphics[width=0.75\textwidth]{pictures_nonsymetric/fields27}\qquad  t=27
\\[.3em]
\includegraphics[width=0.75\textwidth]{pictures_nonsymetric/fields30}\qquad  t=30
\\[.3em]
\hspace{0.02\textwidth}\includegraphics[width=0.73\textwidth]{pictures_nonsymetric/colorbars0}
\hspace{0.1\textwidth}
\caption{\sl \COMMENT{COLOR BAR TO BE CORRECTED $-8$ FOR $8$????}}
\label{fig-movie-asym}
\end{figure}